\newcommand{\R}{\mathbb{R}}
\newcommand{\Z}{\mathbb{Z}}
\newcommand{\X}{\mathbb{X}}
\newcommand{\cC}{\mathcal{C}}
\newcommand{\cM}{\mathcal{M}}
\newcommand{\cQ}{\mathcal{Q}}
\newcommand{\diam}{\mathrm{diam}}
\newcommand{\cz}{\text{CZ}}
\newcommand{\touch}{\leftrightarrow}
\newcommand{\supp}{\text{supp}}
\newcommand{\dist}{\text{dist}}
\newtheorem{thm}{Theorem}
\newtheorem{lem}{Lemma}
\newtheorem{cor}{Corollary}
\title{An example related to Whitney's extension problem for $L^{2,p}(\R^2)$ when $1<p<2$}
\date{December 2023}
\author{Jacob Carruth and Arie Israel}
\begin{document}

\maketitle

\begin{abstract}
    In this paper, we prove the existence of a bounded linear extension operator $T: L^{2,p}(E) \rightarrow L^{2,p}(\mathbb{R}^2)$ when $1<p<2$, where $E \subset \mathbb{R}^2$ is a certain discrete set with fractal structure. Our proof makes use of a theorem of Fefferman-Klartag \cite{fefferman2023linear} on the existence of linear extension operators for radially symmetric binary trees.
\end{abstract}

\section{Introduction}

Let $\X(\R^n)$ be a space of continuous, real-valued functions on $\R^n$ equipped with a norm (or seminorm) $||\cdot||$. For any subset $\Omega \subset \R^n$, define the trace norm (or seminorm) of a continuous function $f: \Omega\rightarrow \R$ by
\[
||f||_{\X(\Omega)} = \inf\{ ||F|| : F|_\Omega = f\}.
\]
The \emph{trace space} $\X(\Omega)$ is then defined to be the space of continuous functions $f:\Omega \rightarrow \R$ with finite trace norm.

We say that a linear map $T : \X(\Omega) \rightarrow \X(\R^n)$ is a \emph{linear extension operator for $\X(\Omega)$} provided that $(Tf)|_\Omega = f$ for all $f \in \X(\Omega)$. We say that \emph{$\X(\Omega)$ admits a bounded linear extension operator} if there exists a constant $C$ (determined only by the function space $\X(\R^n)$) such that there exists a bounded linear extension operator for $\X(\Omega)$ with operator norm at most $C$. One formulation of the \emph{Whitney extension problem} for $\X(\R^n)$ asks whether  $\X(\Omega)$ admits a bounded linear extension operator for every subset $\Omega \subset \R^n$.

Let $\X(\R^n) = L^{m,p}(\R^n)$ denote the (homogeneous) Sobolev space of all functions $F : \R^n \rightarrow \R$ whose (distributional) partial derivatives of order $m$ belong to $L^p(\R^n)$, equipped with the seminorm $\|F \|_{L^{m,p}(\R^n)} = \| \nabla^m F \|_{L^p(\R^n)}$. We assume that $\frac{n}{m} < p < \infty$, so that functions $F \in L^{m,p}(\R^n)$ are continuous -- this ensures that the trace space $L^{m,p}(\Omega)$ is well-defined for an arbitrary subset $\Omega \subset \R^n$.

In \cite{fefferman2014sobolev}, the second-named author, C. Fefferman, and G.K. Luli proved that $L^{m,p}(\Omega)$ admits a bounded linear extension operator when $p>n$, with operator norm at most a constant $C = C(m,n,p)$, for an arbitrary subset $\Omega \subset \R^n$. When $p$ is in the range $\frac{m}{n} < p \le n$, however, it is unknown whether such an operator exists in general.

The first nontrivial case in this range is the space $\X = L^{2,p}(\R^2)$ for $1 < p < 2$ (when $p=2$, it is easy to construct a bounded linear extension operator thanks to the Hilbert space structure). In this paper, we construct a bounded linear extension operator for $L^{2,p}(E)$ for a particular set $E \subset \R^2$ to be defined below. This set $E$ appears to be exceptional. For a variety of sets $\Omega \subset \R^2$ it is possible to construct a bounded linear extension operator for $L^{2,p}(\Omega)$ by making certain analogies with the case $p>2$. Our construction for $L^{2,p}(E)$, however, requires completely new ideas.

We now define the set $E$. We introduce a real number $\varepsilon \in (0,1/2)$. We assume that $\varepsilon$ is smaller than some absolute constant. For an integer $L \geq 1$, we define
\[
\Delta = \varepsilon^L.
\]
The set $E$ is of the form
\[
E = E_1 \cup E_2,
\]
where
\begin{flalign*}
    &E_1 = \{(\Delta \Z) \cap [-1,1]\}\times\{0\} \; \mbox{and}\\
    &E_2 = \Big\{(z, \Delta) : z = \sum_{\ell=1}^L s_\ell \varepsilon^\ell, s_\ell \in \{-1,+1\}\Big\},
\end{flalign*}
Accordingly, $E_1$ is a set of equispaced $\Delta$-separated points on the line $x_2 =0$, and $E_2$ is a set of points of separation $\geq \Delta$ on the line $x_2 = \Delta$. For convenience, we assume that for every $(x, \Delta) \in E_2$ we have $(x,0) \in E_1$. We can ensure this by, e.g., taking $\varepsilon = 1/N$ for a large, positive integer $N$.

In this paper, we prove the following theorem.
\begin{thm}\label{thm: main}
Let $1 < p < 2$. There exists a bounded linear extension operator $T:L^{2,p}(E) \rightarrow L^{2,p}(\R^2)$, depending on $p$, satisfying
\[
    \|Tf\|_{L^{2,p}(\R^2)} \le C \|f\|_{L^{2,p}(E)}\;\text{for any}\; f \in L^{2,p}(E)
    \]
    for some constant $C$ depending only on $p$.
\end{thm}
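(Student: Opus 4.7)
The plan is to construct the extension $Tf$ as a sum $Tf = F_1 + F_2$, where $F_1$ handles the $L^{2,p}$-trace data on the line $E_1$ and $F_2$ handles the residual data on the fractal set $E_2$. Because $E_1$ lies on the horizontal axis as a $\Delta$-equispaced sample while $E_2$ sits just above it at height $\Delta$, the set $E$ essentially prescribes function values on one line together with a discrete ``normal derivative'' at the $2^L$ points of the Cantor-like projection of $E_2$ to the $x_1$-axis.

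First I would produce $F_1 \in L^{2,p}(\R^2)$ extending $f|_{E_1}$ by a routine construction: solve the one-dimensional Sobolev extension problem on the line $\{x_2=0\}$ using piecewise polynomial interpolation between consecutive equispaced points, then multiply by a vertical bump to obtain a function on $\R^2$. Standard estimates give $\|F_1\|_{L^{2,p}(\R^2)} \lesssim \|f\|_{L^{2,p}(E)}$. Setting $g = f - F_1|_E$, the problem reduces to extending $g$, which now vanishes on $E_1$ and records, at each $(z,\Delta) \in E_2$, a value $g(z,\Delta)$ that is essentially $\Delta$ times a discrete normal derivative $h(z) := g(z,\Delta)/\Delta$.

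The fractal set $E_2$ carries a canonical radially symmetric binary tree structure: two points $z, z'$ whose sign sequences $(s_\ell), (s'_\ell)$ first disagree at index $\ell$ satisfy $|z - z'| \sim \varepsilon^\ell$, matching tree distance at depth $\ell$. I would invoke the Fefferman-Klartag theorem from \cite{fefferman2023linear} to produce a one-dimensional function $H$ on $\R$ interpolating $h$ at the fractal points, with norm controlled by the appropriate tree seminorm of the data $\{h(z)\}$. Then $F_2(x_1,x_2) := x_2 \, H(x_1) \, \chi(x_2/\Delta)$, for a suitable cutoff $\chi$ supported near the slab $0 \le x_2 \le 2\Delta$, extends $g$ to $\R^2$, and its $L^{2,p}$ norm is controlled by the relevant norm of $H$.

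The main obstacle is to show that the tree seminorm of $h$ is bounded by $\|f\|_{L^{2,p}(E)}$. This requires, for any competitor $F \in L^{2,p}(\R^2)$ extending $f$, an inequality expressing a hierarchical $\ell^p$-sum of second-order finite differences of $f$ across the binary tree in terms of $\|\nabla^2 F\|_{L^p(\R^2)}$. Because $1 < p < 2$, one cannot orthogonally decompose across scales as in the Hilbert case $p = 2$, so the estimate must exploit disjointness of suitable integration regions at each scale $\varepsilon^\ell$, combined with Sobolev-type inequalities on rectangular strips of aspect ratio $\varepsilon^\ell / \Delta$. Verifying that the resulting scale-by-scale estimates satisfy the hypotheses required by the Fefferman-Klartag tree extension theorem is where the genuine novelty of the argument lies, and it is this scale-by-scale packaging that dictates the precise fractal form of $E_2$.
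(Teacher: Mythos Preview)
Your decomposition $Tf = F_1 + F_2$ and the identification of $h(z) = g(z,\Delta)/\Delta$ as a discrete normal derivative are sound, and you are right that the Fefferman--Klartag theorem is the decisive external input. But the proposal has a real gap on the construction side, which you treat as routine while locating the ``main obstacle'' in the lower bound; in fact the construction is where most of the work lies. First, Theorem~\ref{thm: fk} does not produce a function $H$ on $\R$: it takes leaf data on a weighted binary tree and returns values $\{\eta_C\}$ at \emph{all} tree vertices, minimizing a weighted $\ell^p$ sum of parent--child differences. Turning those tree values into a function on $\R^2$ with controlled $L^{2,p}$ seminorm is a separate, substantial step.

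Second, the product ansatz $F_2(x_1,x_2) = x_2\, H(x_1)\, \chi(x_2/\Delta)$ fails outright. Take $f$ with $f|_{E_1}=0$ and $f(z,\Delta)=\Delta$ for $(z,\Delta)\in E_2$; then $\|f\|_{L^{2,p}(E)}=0$ (witnessed by $G(x)=x_2$), your $F_1$ vanishes by linearity, and $h\equiv 1$. But $\partial_{x_2}^2 F_2 \sim H(x_1)/\Delta$ on the cutoff region, so $\|F_2\|_{L^{2,p}}^p \gtrsim \Delta^{1-p}$, which is nonzero and in fact unbounded in $L$. The defect is structural: a product in $x_1,x_2$ forces the normal derivative above a given $x_1$ to be independent of height, whereas one needs the local $\partial_{x_2}$-value to relax from the leaf value $\eta_{\{z\}}$ near height $\Delta$ to the coarser cluster value $\eta_C$ near height $\diam(C)$, scale by scale up the tree. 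The paper implements precisely this via a Calder\'on--Zygmund decomposition of a square containing $E$: each CZ square $Q$ is assigned $\eta_Q = \eta_{C_Q}$ for the cluster $C_Q$ matching the scale and location of $Q$, the affine pieces $L_Q(x)+\eta_Q x^{(2)}$ are patched by a partition of unity, and the Fefferman--Klartag output controls exactly the quantity $\sum_{Q\leftrightarrow Q'} |\eta_Q-\eta_{Q'}|^p \delta_Q^{2-p}$ appearing in the patching estimate. Your anticipated lower bound (tree seminorm of $h \lesssim \|G\|_{L^{2,p}}$ via disjoint regions at each scale) is indeed required---the paper does it with pairwise disjoint annuli $A_C$ around the clusters---but it is only half of the argument.
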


We now say a bit about the construction of the operator $T$. We fix a square $Q^0\subset \R^2$ containing $E$ with diameter $\sim 1$. Given $f: E\rightarrow \R$, we will describe how to produce a function $F \in L^{2,p}(Q^0)$ satisfying $F|_E = f$ and having the property that
\begin{equation}\label{eq: main 1}
||F||_{L^{2,p}(Q^0)}\le C ||G||_{L^{2,p}(\R^2)}
\end{equation}
for any $G \in L^{2,p}(\R^2)$ for which $G|_E = f$. Once we accomplish this, it is not difficult to deduce Theorem \ref{thm: main}.

We first partition the square $Q^\circ$ into a Calder\'{o}n-Zygmund decomposition $\cz$, which is a finite family of dyadic squares $Q \subset Q^0$ with pairwise disjoint interiors. Our decomposition is a version of the Whitney decomposition of $Q^0 \setminus E$ truncated at scale $\sim \Delta$. Briefly, every $Q \in \cz$ has sidelength $\delta_Q \approx \Delta + \dist(Q,E)$ and satisfies $\#(1.1 Q \cap E) \leq 1$. In particular, the smallest cubes in $\cz$ have sidelength proportional to $\Delta$ and distance to $E$ bounded by $C \Delta$ -- see section \ref{sec:cz} for further details. Our function $F \in L^{2,p}(Q^0)$ takes the form
\begin{equation}\label{F:defn}
F(x) = \sum_{Q \in \cz} \theta_Q(x)\cdot P_Q(x),
\end{equation}
where $\{\theta_Q\}_{Q \in \cz}$ is a Whitney partition of unity subordinate to $\cz$ -- more precisely,  (1) $\sum_Q \theta_Q = 1$ on $Q^0$, (2) $\supp(\theta_Q) \subset 1.1 Q$, and (3) $|\nabla^k \theta_Q| \lesssim \delta_Q^{-k}$ for each $Q \in \cz$ -- and each $P_Q$ is an affine polynomial satisfying 
\begin{equation}
    \label{loc_ext:cond}
    P_Q  = f \mbox{ on } 1.1Q \cap E.
\end{equation} 
Using \eqref{loc_ext:cond} and the support condition on $\theta_Q$, we see that the function $F$ will satisfy $F|_E = f$. For each $Q \in \cz$, $P_Q$ is of the form 
\[
P_Q(z) = L_Q(z) + \eta_Q\cdot z^{(2)}\;\text{for}\; z = (z^{(1)}, z^{(2)}) \in \R^2,
\]
where $\eta_Q$ is a real number and $L_Q$ is an affine polynomial satisfying $\partial_2 L_Q = 0$. Due to the structure of the set $E_1$, our hand is essentially forced when it comes to choosing the $L_Q$'s -- specifically, we  demand that each $L_Q$ agrees with the function $f$ at two suitably chosen points of the set $E_1$. The bulk of the work, then, is in choosing the numbers $\{\eta_Q\}_{Q \in \cz}$ to ensure a certain natural \emph{compatibility condition} on the family $\{P_Q\}_{Q \in \cz}$, namely, the  condition:
\begin{equation}\label{comp:cond} \sum_{Q,Q' \in \cz, Q \leftrightarrow Q'} \delta_Q^{2-p} | \nabla P_Q - \nabla P_{Q'}|^p \lesssim  \| G \|_{L^{2,p}(\R^2)}^p,
\end{equation}
where $Q \leftrightarrow Q'$ denotes the property that $1.1Q \cap 1.1Q' \neq \emptyset$ for two squares $Q,Q' \in \cz$ (we say that $Q$ ``touches'' $Q'$), and $G \in L^{2,p}(\R^2)$ is any function satisfying $G|_E = f$. Estimate \eqref{comp:cond} is the essential ingredient in the proof of the norm bound \eqref{eq: main 1} for the function $F$ defined in \eqref{F:defn}.

Roughly speaking, each $\eta_Q$ corresponds to the average $x^{(2)}$-partial derivative of $F$ on the square $Q$. Since the value of $f$ near points of the set $E_2$ is our only source of information about the $x^{(2)}$-partial derivative of $F$, we make use of a natural hierarchical clustering $\cC$ of the set $E_2$ to choose the $\eta_Q$'s. 

Elements $C \in \cC$ correspond to the subsets of $E_2$ of the form $C(s_1,\cdots,s_k) := \{ (z,\Delta) : z = \sum_{\ell=1}^L s_\ell \epsilon^\ell , \; s_{k+1},\cdots,s_L \in \{ -1, +1\} \}$ for fixed $(s_1,\cdots,s_k) \in \{-1,+1\}^k$. The smallest clusters correspond to the $2^L$ different singleton sets in $E_2$ (when $k=L$), and the largest cluster corresponds to $C=E_2$ (when $k=0$). Distinct clusters are ``well-separated'' in the sense that $\dist(C_1,C_2) \gg \max\{ \diam(C_1),\diam(C_2)\}$ for $C_1,C_2 \in \cC$, $C_1 \neq C_2$.

The set of clusters $\cC$ has the structure of a rooted binary tree of depth $L$. The clusters corresponding to the leaves of the tree are the singletons $\{z\}$ ($z \in E_2$), and $E_2$ is the root of $\cC$.

To every square $Q \in \cz$, we associate a cluster $C_Q \in \cC$. If $1.1Q \cap E_2 \neq \emptyset$ then $Q$ is associated to the singleton cluster $C_Q = \{x_Q\}$ where $x_Q$ is a point of $1.1 Q \cap E_2$. To every other square $Q$ we associate a cluster $C_Q \in \cC$ that is nearby to $Q$.

We then introduce real numbers $\{\eta_C\}_{C \in \cC}$ and postulate that
\begin{equation}\label{etaQ:cond}
\eta_Q = \eta_{C_Q}\;\text{for every}\; Q \in \cz.
\end{equation}

For each $z = (z^{(1)}, \Delta) \in E_2$, we define
\begin{equation*}
\eta_{\{z\}} = \frac{f(z) - f(z^{(1)}, 0)}{\Delta}.
\end{equation*}
This is natural, as for squares $Q \in \cz$ satisfying $1.1Q \cap E_2 \neq \emptyset$ we will need to choose $\eta_Q = \eta_{\{x_Q\}}$ where $x_Q \in 1.1Q \cap E_2$, to ensure that $P_Q$ satisfies \eqref{loc_ext:cond}.

Now that we have chosen $(\eta_{\{z\}})_{z \in E_2}$, we need to choose the remaining $\eta_C$ so as to establish the compatibility condition \eqref{comp:cond} for the $P_Q$. We use the following theorem of C. Fefferman and B. Klartag, proved in \cite{fefferman2023linear}.

\begin{thm}[Fefferman-Klartag, 2023]\label{thm: fk}
    Consider a full, binary tree of depth $N$ with vertices $V$. Let $V_k$ denote the set of vertices of depth $k$. For any $v \in V_k$ ($k=1,\dots,N$), let $\pi(v)$ denote the parent of $v$.

    Write $\partial V \subset V$ to denote the set of leaves of $V$ (i.e., $\partial V = V_N$). Let $\R^V$ and $\R^{\partial V}$ denote the sets of real-valued functions on $V$ and $\partial V$, respectively.

    Given weights $w_1, \dots, w_N >0$, define the $L^{1,p}(V)$-seminorm by
    \begin{equation}\label{eq: smnrm}
    \|\Phi\|_{L^{1,p}(V)} = \bigg( \sum_{k=1}^N w_k \sum_{v \in V_k}|\Phi(\pi(v)) - \Phi(v)|^p\bigg)^{1/p}\;\text{for any}\; \Phi \in \R^V.
    \end{equation}
     Define the $L^{1,p}$ trace seminorm by
    \[
    \|\phi\|_{L^{1,p}(\partial V)} = \inf \{ \|\Phi\|_{L^{1,p}(V)} : \Phi|_E = \phi\}\;\text{for any}\; \phi \in \R^{\partial V}.
    \]

    There exists a linear operator $H: \R^{\partial V} \rightarrow \R^V$ with the following properties:
    \begin{enumerate}
        \item $H$ is an extension operator, i.e.,
        \[
        H\phi|_{\partial V} = \phi\;\text{for any}\; \phi \in \R^{\partial V}.
        \]
        \item For any $\phi \in \R^{\partial V}$, we have
        \[
        \|H\phi\|_{L^{1,p}(V)} \lesssim \|\phi\|_{L^{1,p}(\partial V)}.
        \]
    \end{enumerate}
\end{thm}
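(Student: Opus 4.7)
The plan is to build $H$ by induction on the tree depth $N$. For the inductive step, let $r$ be the root with children $v_1, v_2$ rooting subtrees $V_1, V_2$ of depth $N-1$ (carrying the shifted weights $w_2, \dots, w_N$). Assuming by induction that we have bounded linear extension operators $H^{(1)}, H^{(2)}$ on the subtrees, and given $\phi \in \R^{\partial V}$ with restrictions $\phi_i := \phi|_{\partial V_i}$, define $\Phi_i := H^{(i)} \phi_i$, set $H\phi|_{V_i} := \Phi_i$, and take $H\phi(r) := \tfrac{1}{2}\bigl(\Phi_1(v_1) + \Phi_2(v_2)\bigr)$. The symmetric midpoint choice is natural because the weights depend only on the depth, so no asymmetry between the two subtrees is warranted.

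To verify $\|H\phi\|_{L^{1,p}(V)}^p \lesssim \|\phi\|_{L^{1,p}(\partial V)}^p$ inductively, decompose
\[
\|H\phi\|_{L^{1,p}(V)}^p = w_1\bigl(|H\phi(r) - \Phi_1(v_1)|^p + |H\phi(r) - \Phi_2(v_2)|^p\bigr) + \sum_{i=1,2}\|\Phi_i\|_{L^{1,p}(V_i)}^p.
\]
The subtree terms are handled by the inductive hypothesis together with $\sum_i \|\phi_i\|_{L^{1,p}(\partial V_i)}^p \le \|\phi\|_{L^{1,p}(\partial V)}^p$, which follows from restricting any extension of $\phi$ to each subtree. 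With the midpoint choice, the root contribution collapses to $2^{1-p} w_1 |\Phi_1(v_1) - \Phi_2(v_2)|^p$, so the remaining task is to estimate this ``cross term'' by a constant multiple of $\|\phi\|_{L^{1,p}(\partial V)}^p$.

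The cross-term bound is the principal obstacle. Since $\Phi_i(v_i)$ is implicitly fixed by the previous inductive step, it need not agree with the value at $v_i$ of an optimal extension $\widetilde{\Phi}$ of $\phi$. The triangle inequality does yield the correct bound for the optimum,
\[
w_1|\widetilde{\Phi}(v_1) - \widetilde{\Phi}(v_2)|^p \le 2^{p-1} w_1 \bigl(|\widetilde{\Phi}(r)-\widetilde{\Phi}(v_1)|^p + |\widetilde{\Phi}(r) - \widetilde{\Phi}(v_2)|^p\bigr) \le 2^{p-1}\|\phi\|_{L^{1,p}(\partial V)}^p,
\]
so the genuine question is how much $\Phi_i(v_i)$ deviates from $\widetilde{\Phi}(v_i)$. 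To control this deviation I would strengthen the induction to track, for each subtree, a ``shifted-root'' estimate: modifying the extension so that $v_i$ takes a prescribed value $t$ costs at most an extra term $c(V_i)\,|t - \Phi_i(v_i)|^p$, where $c(V_i)$ is an effective weight attached to $v_i$ obeying a controlled recursion under branching. The technical heart of the argument is verifying that this recursion admits a uniform bound depending only on $p$; the convexity inequalities available for $p \in (1,\infty)$ (of Clarkson/Hanner type) drive this recursion, and the radial symmetry of the weights $w_k$ is what allows the recursion to close with a constant $C_p$ independent of $N$ and of the particular weights.
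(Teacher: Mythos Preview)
First, the paper does not prove this theorem: it is quoted from Fefferman--Klartag \cite{fefferman2023linear} and used as a black box, so there is no proof in the paper to compare your attempt against.

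That aside, your construction has a genuine gap: the iterated-midpoint operator $H$ is \emph{not} bounded uniformly in the weights. Take $N=2$ with leaves $(a_1,a_2,b_1,b_2)$ and data $\phi=(1,1,1,-1)$. Your inductive rule yields $\Phi_1(v_1)=1$, $\Phi_2(v_2)=0$, $H\phi(r)=\tfrac12$, so $\|H\phi\|_{L^{1,p}(V)}^p = 2^{1-p}w_1 + 2w_2$. Meanwhile the competitor with $\widetilde\Phi(r)=\widetilde\Phi(v_1)=\widetilde\Phi(v_2)=\tfrac12$ has cost $w_2\bigl(3\cdot 2^{-p}+(3/2)^p\bigr)$, hence $\|\phi\|_{L^{1,p}(\partial V)}^p \le C_p\,w_2$, and letting $w_1/w_2\to\infty$ sends $\|H\phi\|^p/\|\phi\|_{L^{1,p}(\partial V)}^p\to\infty$. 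The failure is structural: when $w_1$ is large, a near-optimal extension must pull $v_1$ and $v_2$ toward a common value, but your $H$ freezes $H\phi(v_i)=\Phi_i(v_i)$ using only the subtree, which sees $w_2,\dots,w_N$ but never $w_1$.

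Your ``shifted-root'' strengthening does not rescue this: it concerns the \emph{existence} of good extensions of $\phi_i$ with a prescribed value at $v_i$, which is a statement about the trace problem on the subtree, not about the operator $H$ you have already committed to. Any attempt to bound the cross term via this route needs $w_1/c(V_i)$ uniformly bounded, but $c(V_i)$ is determined by $w_2,\dots,w_N$ alone while $w_1$ is arbitrary. To make an inductive scheme work, the value assigned at each internal vertex must itself depend on the weights above it, and the linearity requirement then forces specific weight-dependent averaging coefficients. Identifying those coefficients and proving the resulting operator is bounded is precisely the substance of the Fefferman--Klartag argument; it does not reduce to a routine application of Clarkson-type inequalities.
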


We apply Theorem \ref{thm: fk} to the binary tree $\cC$ equipped with a natural set of edge weights, and choose the $\eta_C$ by applying the extension operator $H$ to the data $\{\eta_{\{z\}}\}_{z \in E_2}$; because the resulting choice of $\{\eta_C\}$  minimizes the seminorm \eqref{eq: smnrm}, one can show the resulting polynomials $\{P_Q\}$ defined in terms of the $\{\eta_Q\}$ (which are related to the $\{\eta_C\}$ in \eqref{etaQ:cond}) will satisfy \eqref{comp:cond}. As mentioned before, the norm bound \eqref{eq: main 1} follows as a consequence of this condition.

Because the $\eta_{\{z\}}$ depend linearly on $f$, and the $\eta_C$ depend linearly on $\{ \eta_{\{z\}}\}_{z \in E_2}$, it is obvious that the $P_Q$  depend linearly on $f$, and so, $F$ depends linearly on $f$.

It is finally trivial to extend $F : Q^0 \rightarrow \R$ to a function $\widetilde{F} : \R^2 \rightarrow \R$ satisfying $\| \widetilde{F} \|_{L^{2,p}(\R^2)} \leq C \| F \|_{L^{2,p}(Q^0)}$. Since $F \in L^{2,p}(Q^0)$ is an extension of $f$ satisfying the norm bound \eqref{eq: main 1}, we deduce that $\widetilde{F} \in L^{2,p}(\R^2)$ is an extension of $f$ satisfying $\| \widetilde{F} \|_{L^{2,p}(\R^2)} \leq C \| G \|_{L^{2,p}(\R^2)}$ for any $G \in L^{2,p}(\R^2)$ with $G|_E = f$. Thus, we cam set $Tf = \widetilde{F}$, and $T$ is a bounded linear extension operator for $L^{2,p}(E)$.

This concludes our overview of the proof of Theorem \ref{thm: main}.

We remark that when $\varepsilon$ is bounded away from a certain ``critical value'' $\varepsilon_0 = (1/2)^{1/(2-p)}$, it is possible to prove Theorem \ref{thm: main} without invoking Theorem \ref{thm: fk}. In particular, one can construct an extension operator for $L^{2,p}(E)$ by making certain analogies with the case $p>2$. When $\varepsilon = \varepsilon_0$, however, we know of no such construction.

Throughout this paper we write $C, C', C'', \dots$ to denote positive constants. Such constants are allowed to depend only on $p$ (but not on $\varepsilon, L$). We write $C_X, C'_X, \dots$ for positive constants depending on some quantity $X$. For positive real numbers $A, B$ we write $A \lesssim B$ if there exists a constant $C$ such that $A \le C B$ and $A \lesssim_X B$ if $A \le C_X B$. We write $A \approx B$ if $A \lesssim B$ and $A\lesssim B$.

We thank Charles Fefferman, Anna Skorobogotova, and Ignacio Uriarte-Tuero, for helpful conversations. We also thank Pavel Shvartsman for suggesting that we use the Hardy-Littlewood maximal function; this greatly simplified some of the arguments.

\section{Preliminaries}\label{sec: prelim}

For a (Lebesgue) measurable function $F$ defined on a measurable set $S \subset \R^2$ with $|S| > 0$, we write $(F)_S := |S|^{-1} \int_S F \ dx$. We begin by stating a couple of standard inequalities for Sobolev spaces.  For details see, e.g., \cite{gilbarg1977elliptic}.

Given an annulus $A = \{ x \in \R^2 : r \leq |x-x_0| \leq R \}$ with inner radius $r$ and outer radius $R$,  the \emph{thickness ratio} of $A$ is defined to be the quantity $R/r$.

\begin{lem}\label{lem: preponc}
    Let $\Omega \subset \R^2$ be a square, a ball, or an annulus with thickness ratio at most $C_0 \in [1,\infty)$. Then the following hold.
    \begin{enumerate}
        \item Let $1 < r < 2$, and $r' = 2r/(2-r)$. Then for any $F \in L^{2,r}(\R^2)$ we have
        \[
        \bigg( \int_\Omega | \nabla F(x) - (\nabla F)_\Omega |^{r'}\ dx \bigg)^{1/r'} \lesssim_{r,C_0} \|F\|_{L^{2,r}(\Omega)}.
        \]
        \item Let $q > 2$. Then for any $F\in L^{1,q}(\Omega)$ we have (after potentially redefining $F$ on a set of measure 0) that
        \[
        |F(x) - F(y)| \lesssim_{q,C_0} |x-y|^{1-2/q} \|F\|_{L^{1,q}}\;\text{for any}\; x,y \in \Omega.
        \]
    \end{enumerate}
\end{lem}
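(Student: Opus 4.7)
The plan is to reduce both parts of the lemma to standard Sobolev inequalities on balls, and then transfer to squares and annuli by scaling and short covering arguments whose complexity depends only on $C_0$.

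For part (1), I would begin from the classical Sobolev-Poincar\'e inequality on a ball $B \subset \R^2$: for $1 < r < 2$ with $r' = 2r/(2-r)$,
\[
\| G - (G)_B \|_{L^{r'}(B)} \lesssim_r \|\nabla G\|_{L^r(B)} \quad \text{for all } G \in W^{1,r}(B).
\]
Applied to $G = \partial_i F$ for $i=1,2$, this gives the desired estimate when $\Omega$ is a ball, and the square case follows by inscribing in and circumscribing by balls of comparable radii. For part (2), I would analogously invoke the standard Morrey inequality for balls, $|F(x)-F(y)| \lesssim_q |x-y|^{1-2/q} \|\nabla F\|_{L^q(B)}$, which one proves directly by an integral representation along segments together with averaging.

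To handle the annulus case with thickness ratio at most $C_0$, I would first rescale so that the inner radius equals $1$ and the outer radius is some $R \in [1,C_0]$. I would then cover $\Omega$ by $N \leq N(C_0)$ balls $B_1, \dots, B_N \subset \Omega$ of radius $\approx 1$, arranged so that consecutive balls satisfy $|B_i \cap B_{i+1}| \approx |B_i|$ and the overlap graph is connected. For part (1), applying the ball case on each $B_i$ bounds $\|\nabla F - (\nabla F)_{B_i}\|_{L^{r'}(B_i)}$ by $\|F\|_{L^{2,r}(B_i)}$; the overlaps let me identify the averages $(\nabla F)_{B_i}$ with one another, each up to an error of size $\|F\|_{L^{2,r}(\Omega)}$, and a final triangle inequality replaces the common constant by $(\nabla F)_\Omega$. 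For part (2), any two points of $\Omega$ can be joined by a chain of at most $N(C_0)$ overlapping balls, and iterating Morrey's inequality along this chain yields the H\"older estimate with constant depending only on $q$ and $C_0$.

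The only mild obstacle is the annulus case; the ball and square cases are entirely standard. The dependence of the implicit constants on $C_0$ enters precisely through the size $N(C_0)$ of the covering. I would omit the routine computations and refer the reader to Gilbarg-Trudinger for the underlying inequalities on balls.
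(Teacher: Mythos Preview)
The paper does not actually prove this lemma: it states the inequalities as standard and refers the reader to Gilbarg--Trudinger \cite{gilbarg1977elliptic} for details. Your proposal is correct and in fact supplies considerably more detail than the paper does; your final remark (omit the computations and cite Gilbarg--Trudinger) is essentially the paper's entire treatment.
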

We use these inequalities to prove the following basic lemma.
\begin{lem}\label{lem: poincare}
    Let $\Omega\subset \R^2$ be a square, a ball, or an annulus with thickness ratio at most $C_0 \in [1,\infty)$ and let $1 < r< 2$. For any $F \in L^{2,r}(\Omega)$ and any $y \in \Omega$, we define a function $T_{y,\Omega}(F):\R^2 \rightarrow \R$ by
    \[
    T_{y,\Omega}(F)(x) = F(y) + (\nabla F)_{\Omega}\cdot (x-y).
    \]
    We then have
    \[
    |T_{x,\Omega}(F)(y) - T_{z, \Omega}(F)(y)|
        \lesssim_{r,C_0}  \|F\|_{L^{2,r}(\Omega)} |x-z|^{2-2/r}\;\text{for any}\; x,z \in \Omega.
    \]
    In particular, 
    \[
    \| F - T_{y,\Omega}(F)\|_{L^\infty(\Omega)} \lesssim_{r,C_0} \diam(\Omega)^{2-2/r}\|F\|_{L^{2,r}(\Omega)}.
    \]
\end{lem}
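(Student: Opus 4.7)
The plan is to rewrite the difference $T_{x,\Omega}(F)(y) - T_{z,\Omega}(F)(y)$ in a form that is manifestly independent of $y$ and then bound it using the two Sobolev-type inequalities stated in Lemma \ref{lem: preponc}. Expanding the definitions,
\[
T_{x,\Omega}(F)(y) - T_{z,\Omega}(F)(y) = F(x) - F(z) - (\nabla F)_\Omega \cdot (x-z),
\]
which does not involve $y$. I would therefore set $G(w) := F(w) - (\nabla F)_\Omega \cdot w$, so that $\nabla G = \nabla F - (\nabla F)_\Omega$ and the displayed difference equals $G(x) - G(z)$. The task is then to produce the desired H\"older-type estimate for $G$.

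Next, let $r' = 2r/(2-r)$, so that $r' > 2$ since $r > 1$. By Lemma \ref{lem: preponc}(1),
\[
\|\nabla G\|_{L^{r'}(\Omega)} = \bigl(\textstyle\int_\Omega |\nabla F - (\nabla F)_\Omega|^{r'}\bigr)^{1/r'} \lesssim_{r,C_0} \|F\|_{L^{2,r}(\Omega)},
\]
i.e., $\|G\|_{L^{1,r'}(\Omega)} \lesssim_{r,C_0} \|F\|_{L^{2,r}(\Omega)}$. Then Lemma \ref{lem: preponc}(2) applied to $G$ with $q = r'$ gives $|G(x) - G(z)| \lesssim_{r,C_0} |x-z|^{1 - 2/r'} \|G\|_{L^{1,r'}(\Omega)}$ for all $x,z \in \Omega$. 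A direct computation shows $1 - 2/r' = 2 - 2/r$, which matches the exponent claimed in the lemma, so chaining the two inequalities yields the main estimate.

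For the ``in particular'' statement, I would specialize to $z = y$ in the main estimate, noting that $T_{y,\Omega}(F)(y) = F(y)$ and that $T_{x,\Omega}(F)(y) - T_{y,\Omega}(F)(y) = F(x) - F(y) - (\nabla F)_\Omega \cdot (x - y) = F(x) - T_{y,\Omega}(F)(x)$ after regrouping terms. Bounding $|x-y| \le \diam(\Omega)$ and taking the supremum over $x \in \Omega$ gives the stated $L^\infty$ bound. I do not anticipate a serious obstacle: the only nontrivial point is verifying that the Sobolev exponent $r' = 2r/(2-r)$ produces exactly the H\"older exponent $2 - 2/r$ when one passes through the Morrey-type inequality, and this is just arithmetic.
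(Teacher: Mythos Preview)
Your proof is correct and is essentially the same as the paper's. The paper defines $G_x(y)=T_{y,\Omega}(F)(x)$ and applies Parts 1 and 2 of Lemma~\ref{lem: preponc} to $G_x$; your function $G(w)=F(w)-(\nabla F)_\Omega\cdot w$ differs from the paper's $G_x$ only by an additive constant, so the gradient estimate from Part~1 and the Morrey-type bound from Part~2 are applied identically, and your derivation of the ``in particular'' clause is the same specialization $z=y$.
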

\begin{proof}
    Let $r' = 2r/(2-r)$. For any $x \in \R^2$ define $G_x : \R^2 \rightarrow \R$ by
    \[
    G_x(y) = T_{y,\Omega}(F)(x).
    \]
    By Part 1 of Lemma \ref{lem: preponc}, we have
    \[
    \|G_x\|_{L^{1,r'}(\Omega)} \lesssim_{r,C_0} \|F\|_{L^{2,r}(\Omega)}.
    \]
    Note that $r'>2$, since $r < 2$. Applying Part 2 of Lemma \ref{lem: preponc} to the function $G_x$ proves the lemma.
\end{proof}

Let $B(z,r)$ denote the ball of radius $r>0$ centered at $z \in \R^2$, and let $\cM$ denote the uncentered Hardy-Littlewood maximal operator, i.e.,
\[
(\cM f)(x) = \sup_{B(z,r) \ni x} \frac{1}{|B(z,r)|} \int_{B(z,r)} f(y)\ dy\;\text{for any}\; f \in L^1_{\text{loc}}(\R^2).
\]
Recall that $\cM$ is a bounded operator from $L^q(\R^2)$ to $L^q(\R^2)$ for any $1 < q\le  \infty$ (see, e.g., \cite{stein1993harmonic}).

\section{The CZ decomposition}\label{sec:cz}

We will work with squares in $\R^2$; by this we mean an axis parallel square of the form $Q = [a_1, b_1) \times [a_2, b_2)$. We let $\delta_Q$ denote the sidelength of such a square $Q$. To \emph{bisect} a square $Q$ is to partition $Q$ into squares $Q_1, Q_2, Q_3 ,Q_4$, where $\delta_{Q_i} = \delta_Q / 2$ for each $i = 1,2,3,4$. We refer to the $Q_i$ as the \emph{children} of $Q$. 

We define a square $Q^0 = [-4, 4) \times [-4,4)$; note that $E \subset Q^0$. A \emph{dyadic} square $Q$ is one that arises from repeated bisection of $Q^0$. Every dyadic square $Q \neq Q^0$ is the child of some square $Q'$; we call $Q'$ the \emph{parent} of $Q$ and denote this by $(Q)^+ = Q'$.

We say that two dyadic square $Q, Q'$ \emph{touch} if $(1.1Q \cap 1.1Q') \ne \emptyset$. We write $Q \leftrightarrow Q'$ to denote that $Q$ touches $Q'$.  

For any dyadic square $Q$, we define a collection $\cz(Q)$, called the \emph{Calder\'{o}n-Zygmund decomposition of} $Q$, by setting
\[
\cz(Q) = \{Q\} \;\text{if } \#(3Q\cap E) \le 1,
\]
and
\[
\cz(Q) = \bigcup \{\cz(Q') : (Q')^+ = Q\}\;\text{if}\; \#(3Q \cap E) \ge 2.
\]

We write $\cz = \cz(Q^0)$. Note that $\cz \neq \{ Q^0\}$ because $\#(3Q^0 \cap E) = \# E \geq 2$. Then $\cz$ is a partition of $Q^0$ into dyadic squares $Q$ satisfying
\begin{equation}\label{eq: cz 1}
\delta_Q \ge \frac{\Delta}{9}.
\end{equation}
To see this, observe that for any $Q \in \cz$ we have $\# (3Q^+ \cap E) \ge 2$ and $3Q^+ \subset 9Q$. Since points of $E$ are $\Delta$-separated, we have $\delta_Q \ge \Delta /9$, as claimed.

We now summarize some basic properties of the collection $\cz$.

\begin{lem}\label{lem: cz}
    The collection $\emph{CZ}$ has the following properties:
    \begin{enumerate}
    \item For any $Q \in \emph{CZ}$, we have $\#(1.1Q \cap E) \le 1$ and $\#(3Q^+ \cap E ) \ge 2$.
        \item For any $Q, Q' \in \emph{CZ}$ with $Q \touch Q'$, we have $\frac{1}{2}\delta_Q \le \delta_{Q'}\le 2 \delta_Q$.
        \item For any $Q \in \emph{CZ}$, we have
        \[
        \#\{Q' : Q\touch Q'\} \lesssim 1.
        \]
        \item For any $x \in \R^2$,
        \[
        \#\{Q \in \emph{CZ} : x \in 1.1Q\} \lesssim 1.
        \]
        \item For any $Q \in \emph{CZ}$ with $\#(1.1Q \cap E) = 0$, we have $\delta_Q \approx \emph{dist}(Q,E)$.
    \end{enumerate}
\end{lem}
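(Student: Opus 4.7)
The plan is to verify the five properties in the order listed, since the later ones rely on the earlier ones. Property 1 is essentially a restatement of the stopping rule: any $Q \in \cz$ lies in $\cz$ precisely because $\#(3Q \cap E) \le 1$, and the inclusion $1.1Q \subset 3Q$ yields the first bound. For the second bound, $Q \ne Q^0$ (which would force $Q^0 \notin \cz$, since $\# E \ge 2$ causes $Q^0$ to be subdivided), so $Q$ has a parent $Q^+$ that was split, giving $\#(3Q^+ \cap E) \ge 2$.

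Property 2 is where I expect the main technical obstacle to lie, since it requires the explicit dilation constants ($1.1$ and $3$) in the definitions of $\touch$ and of $\cz$ to interact cleanly with the dyadic scaling. The plan is to argue by contradiction: assume $Q \touch Q'$ and $\delta_{Q'} > 2\delta_Q$, so by dyadic scaling $\delta_{Q'} \ge 4\delta_Q$. Since $\cz$ is a partition of $Q^0$, the case $Q \subset Q'$ is ruled out, so $Q \cap Q' = \emptyset$ and $Q$ sits just outside $Q'$. I would then pass to the ancestor $\hat Q$ of $Q$ with $\delta_{\hat Q} = \delta_{Q'}$; because $\hat Q$ is a strict ancestor of $Q$, the stopping rule gives $\#(3\hat Q \cap E) \ge 2$. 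The touching hypothesis $1.1Q \cap 1.1Q' \ne \emptyset$ forces $\hat Q$ to be equal or adjacent to $Q'$, and a careful computation of the $\ell^\infty$ separation of the centers of $\hat Q$ and $Q'$ should yield $3\hat Q \cap E \subset 3Q'$, contradicting Property 1 applied to $Q'$.

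Properties 3 and 4 follow from Property 2 by a standard packing argument. For Property 3, any $Q'$ with $Q \touch Q'$ has $\delta_{Q'} \in [\delta_Q/2, 2\delta_Q]$ and lies in a region of linear size $\lesssim \delta_Q$ surrounding $Q$; the disjoint-interior property of $\cz$ together with the area lower bound $\gtrsim \delta_Q^2$ of each such $Q'$ caps the count by an absolute constant. Property 4 is nearly identical: for any two $Q, Q'' \in \cz$ with $x \in 1.1Q \cap 1.1Q''$ we have $Q \touch Q''$, so their sidelengths are comparable by Property 2, and disjoint-interior packing once more bounds the count.

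Property 5 combines both halves of Property 1. The hypothesis $\#(1.1Q \cap E) = 0$ directly gives the lower bound $\dist(Q,E) \ge 0.05\delta_Q$. For the upper bound, Property 1 supplies a point of $E$ inside $3Q^+$; since $3Q^+$ has diameter $\lesssim \delta_Q$ and contains $Q$, that point lies within distance $\lesssim \delta_Q$ of $Q$, yielding $\dist(Q,E) \lesssim \delta_Q$.
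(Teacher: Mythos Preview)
The paper omits the proof of this lemma entirely, deferring to the literature as ``standard.'' So there is nothing to compare against; I can only judge your argument on its own merits.

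Your treatment of Properties 1, 3, 4, and 5 is correct and is exactly the standard reasoning.

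There is, however, a genuine gap in your argument for Property 2. You pass to the ancestor $\hat Q$ of $Q$ with $\delta_{\hat Q}=\delta_{Q'}$ and then assert that a computation ``should yield $3\hat Q\cap E\subset 3Q'$.'' It does not. Once you know $\hat Q$ and $Q'$ are adjacent dyadic squares of the \emph{same} sidelength $s=\delta_{Q'}$, the triples $3\hat Q$ and $3Q'$ are congruent squares of sidelength $3s$ whose centers sit exactly $s$ apart in $\ell^\infty$; hence $3\hat Q\setminus 3Q'$ is a slab of width $s$ on the side of $\hat Q$ opposite $Q'$. Nothing prevents both points of $E\cap 3\hat Q$ from sitting in that slab, so you cannot conclude $\#(3Q'\cap E)\ge 2$.

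The repair is to use a \emph{smaller} ancestor. Take $R$ to be the ancestor of $Q$ with $\delta_R=\delta_{Q'}/2$ (this is still a strict ancestor since $\delta_{Q'}\ge 4\delta_Q$), so that $\#(3R\cap E)\ge 2$ by the stopping rule. From $1.1Q\subset 1.1R$ you get $1.1R\cap 1.1Q'\neq\emptyset$; because $R$ and $Q'$ are disjoint dyadic squares with endpoints on the $\delta_R$--grid, and the combined dilation slack $0.05\delta_R+0.05\delta_{Q'}=0.15\delta_R$ is strictly less than one grid step $\delta_R$, this forces $\overline{R}\cap\overline{Q'}\neq\emptyset$. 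Choosing $p\in\overline R\cap\overline{Q'}$, any $x\in 3R$ satisfies
\[
|x-c_{Q'}|_\infty\le |x-c_R|_\infty+|c_R-p|_\infty+|p-c_{Q'}|_\infty\le \tfrac{3}{2}\delta_R+\tfrac{1}{2}\delta_R+\tfrac{1}{2}\delta_{Q'}=\tfrac{3}{2}\delta_{Q'},
\]
so $3R\subset\overline{3Q'}$ and the contradiction $\#(3Q'\cap E)\ge 2$ follows. (Equivalently, one can use $Q^+$ and split into the cases $\delta_{Q'}=4\delta_Q$ and $\delta_{Q'}\ge 8\delta_Q$; your $\hat Q$ is simply one dyadic level too large for the containment to hold.)
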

We omit the proof of Lemma \ref{lem: cz}, as this type of decomposition is standard in the literature; see, e.g., \cite{fefferman2020fitting}.

We say that $Q \in \cz$ is a \emph{boundary square} if $1.1 Q \cap \partial Q^0 \ne \emptyset$. Denote the set of boundary squares by $\partial \cz$. 

Observe that every $Q \in \partial \cz$ satisfies $\delta_Q \ge 1$. Indeed, this follows because $E \subset [-1,1] \times [-1,1]$, and if $Q$ is a dyadic square intersecting the boundary of $Q^0$ with $\delta_Q = 1$ then $3Q$ is disjoint from $E$. We denote $z_0 := (-1,0)$, $w_0 := (1,0)$ to be the points of maximal separation in $E_1$. Note that 
\begin{equation}\label{z0w0:prop}
z_0, w_0 \in 50 Q \mbox{ for all } Q \in \partial \cz.
\end{equation}

We say that a square $Q \in \cz$ is of \emph{Type I} if $\#(1.1Q \cap E_1) = 1$, \emph{Type II} if $\#(1.1Q \cap E_2) = 1$, and \emph{Type III} if $\#(1.1Q \cap E) = 0$. We denote the collection of squares of Type I, II, and III by $\cz_I$, $\cz_{II}$, and $\cz_{III}$, respectively. These collections form a partition of $\cz$ because $\#(3Q \cap E) \leq 1$ for any $Q \in \cz$ by construction.

Observe that $\partial \cz \subset \cz_{III}$.

From Property 1 of Lemma \ref{lem: cz}, it is easy to deduce that
\begin{equation}\label{eq: cz 2}
\delta_Q \approx \Delta \;\text{for every}\; Q \in \cz_I \cup \cz_{II}.
\end{equation}
From Property 5 of Lemma \ref{lem: cz} we have
\[
\delta_Q \approx \dist(Q,E)\;\text{for every}\; Q \in \cz_{III}.
\]
Combining this with \eqref{eq: cz 1}, and using that points of $E$ are $\Delta$-separated, we get
\[
\delta_Q \approx \dist(Q,E_1)\;\text{for every}\;Q \in \cz_{III}.
\]
Combining this with \eqref{eq: cz 1}, \eqref{eq: cz 2} gives
\begin{equation}\label{eq: cz 3}
    \delta_Q \approx (\Delta + \dist(Q, E_1))\;\text{for any} \; Q \in \cz.
\end{equation}

For each $Q \in \cz_{II}$ we let $x_Q$ be the unique point in $(1.1Q) \cap E = (1.1Q) \cap E_2$. Note that $x_Q$ is undefined for $Q \in \cz \setminus \cz_{II}$.

For each $Q \in \cz$ we associate a pair of points $z_Q,w_Q \in E_1$. We list the key properties of these points in the next lemma.

\begin{lem} \label{lem: bpoint}
For each $Q \in \emph{CZ}$ there exist points $z_Q, w_Q \in (50Q) \cap E_1$, satisfying the conditions below.
\begin{enumerate}
    \item $|z_Q - w_Q| \approx \delta_Q$.
    \item If $Q \in \cz_I$ then $z_Q \in (1.1Q) \cap E_1$.
    \item If $Q \in \cz_{II}$ then $z_Q = (x_Q^{(1)}, 0)$, where $x_Q = (x_Q^{(1)}, \Delta)$ is the unique point in $ 1.1Q \cap E_2$.
    \item If $Q \in \partial \cz$ then $z_Q = z_0 $ and $w_Q = w_0$.
\end{enumerate}

\end{lem}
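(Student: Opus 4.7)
The plan is a straightforward case analysis on the type of $Q \in \cz$. In each case I define $z_Q$ and $w_Q$ so that the type-specific conditions (2, 3, 4) hold by construction, and then verify condition 1 together with the containment $z_Q, w_Q \in 50Q$. The main technical point, which I flag up front, is the handling of the case $Q \in \cz_{III} \setminus \partial\cz$: there $E$ gives no forced candidate for $z_Q$, and I must argue that the segment $[-1,1] \times \{0\}$ is long enough (relative to $\delta_Q$) to supply two points of $E_1$ at separation $\approx \delta_Q$ inside $50 Q$.

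If $Q \in \partial\cz$, I set $z_Q = z_0$ and $w_Q = w_0$, giving condition 4 directly. Condition 1 follows because $\delta_Q \approx 1$: the lower bound $\delta_Q \geq 1$ was noted in the paragraph following the definition of $\partial \cz$, and $\delta_Q \leq \delta_{Q^0} = 8$ is trivial. The containment $z_0, w_0 \in 50Q$ is \eqref{z0w0:prop}.

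If $Q \in \cz_I$, I take $z_Q$ to be the unique point of $1.1Q \cap E_1$ (which exists by Property 1 of Lemma \ref{lem: cz} and the definition of $\cz_I$), giving condition 2. If $Q \in \cz_{II}$, I set $z_Q = (x_Q^{(1)}, 0)$, where $x_Q = (x_Q^{(1)}, \Delta)$ is the unique point of $1.1Q \cap E_2$; this point lies in $E_1$ by the assumption made when defining $E$, giving condition 3. Writing $z_Q = (a, 0)$, I then choose $w_Q = (a + s\Delta, 0)$ with $s \in \{-1, +1\}$ selected so that $a + s\Delta \in [-1, 1]$ (possible since $a \in [-1, 1]$ and $\Delta < 1$), ensuring $w_Q \in E_1$. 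Then $|z_Q - w_Q| = \Delta \approx \delta_Q$ by \eqref{eq: cz 2}, and both points lie within $O(\delta_Q)$ of the center of $Q$, hence in $50Q$.

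For $Q \in \cz_{III} \setminus \partial\cz$, only condition 1 is substantive. I would pick $z_Q$ to be any point of $E_1$ nearest to $Q$; by Property 5 of Lemma \ref{lem: cz} and \eqref{eq: cz 3}, $\dist(z_Q, Q) = \dist(Q, E_1) \approx \delta_Q$, so $z_Q \in C_1 Q$ for some universal $C_1$. An auxiliary observation I would need is that $\delta_Q \lesssim 1$ for non-boundary $Q \in \cz$: I would verify this by checking that any dyadic square in $Q^0$ of sidelength $\geq 2$ that is not in $\partial \cz$ must have $1.1Q$ containing at least one point of $E_1$, ruling out membership in $\cz_{III} \setminus \partial \cz$. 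Writing $z_Q = (a, 0)$ with $a \in [-1, 1]$, at least one of $1 - a$ or $1 + a$ is $\geq 1 \gtrsim \delta_Q$, so I can find a sign $s \in \{-1, +1\}$ and a positive integer $k$ with $k \Delta \approx \delta_Q$ (taking $k = 1$ if $\delta_Q \approx \Delta$) such that $w_Q := (a + sk\Delta, 0) \in E_1$. Then $|z_Q - w_Q| = k\Delta \approx \delta_Q$, and $w_Q$ lies within $O(\delta_Q)$ of $z_Q \in C_1 Q$, hence in $50 Q$.
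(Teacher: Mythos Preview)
Your proof is correct and follows essentially the same case analysis as the paper; the treatment of $\cz_I$, $\cz_{II}$, and $\partial\cz$ is identical. The one organizational difference is in the remaining case $\cz_{III}\setminus\partial\cz$: the paper handles this first as a ``generic'' baseline---arguing directly from the stopping rule that $3Q^+\cap E\neq\emptyset$, hence $9Q\cap E\neq\emptyset$, hence $30Q\cap E_1\neq\emptyset$ (projecting an $E_2$ point down by $\Delta\le 9\delta_Q$ if necessary), and then taking the extremal points of $50Q\cap E_1$---whereas you invoke \eqref{eq: cz 3} and add the auxiliary bound $\delta_Q\lesssim 1$ for non-boundary squares in order to fit $w_Q$ inside $[-1,1]$.

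One caution: you rely on the implicit constant in \eqref{eq: cz 3} to get $z_Q\in C_1 Q$, and then need $C_1$ small enough that $z_Q$ and the translate $w_Q$ (at distance up to $9\delta_Q$) both land in $50Q$. Since the paper does not track the constant in \eqref{eq: cz 3}, this is not automatic from what you cite. The paper avoids this by re-deriving the explicit bound $30Q\cap E_1\neq\emptyset$ straight from the stopping rule, which guarantees everything fits in $50Q$; you could do the same, or simply note that the constant in \eqref{eq: cz 3} can be taken small enough. Also, your auxiliary claim is phrased slightly off: non-boundary dyadic squares of sidelength $\ge 2$ actually satisfy $\#(3Q\cap E)\ge 2$ and so never appear in $\cz$ at all---but your intended conclusion $\delta_Q\le 1$ for $Q\in\cz_{III}\setminus\partial\cz$ is correct.
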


\begin{proof}
We first check that for each $Q \in \cz$ there exist points $z_Q,w_Q \in (50Q) \cap E_1$ satisfying $|z_Q - w_Q| \approx \delta_Q$. Fix $Q \in \cz$. By definition of the Calder\'{o}n-Zygmund stopping rule, the set $3Q^+ \cap E$ is nonempty; also, $3Q^+ \subset 9Q$ by properties of dyadic squares, so $9Q \cap E$ is nonempty. Thus, either $9 Q \cap E_1 \neq \emptyset$ or $9Q \cap E_2 \neq \emptyset$. We claim that $30 Q \cap E_1$ is nonempty in either case. This is true if $9Q \cap E_1 \neq \emptyset$. On the other hand if $9Q \cap E_2 \neq \emptyset$, we let $y_Q$ be a point of $9 Q \cap E_2$, and note that $\dist(y_Q,E_1) = \Delta \leq 9\delta_Q$, hence, $(30Q) \cap E_1 \neq \emptyset$. In any case, we have shown $(30Q) \cap E_1 \neq \emptyset$. Once that is established, by the definition of $E_1$ as a sequence of $\Delta$-separated points and because $9\delta_Q \geq \Delta$, we see that $50Q \cap E_1$ contains at least two points. Thus, we can order the points of $(50Q) \cap E_1 \subset \R \times \{0\}$ according to the order on the real number line and let $z_Q$ and $w_Q$ be the distinct minimal and maximal points in the set $(50Q) \cap E_1$. Obviously then $|z_Q - w_Q | \approx \delta_Q$.

We make small modifications to the construction to establish conditions 2-4 of the lemma.

If $Q \in \cz_I$ then we instead select $z_Q \in (1.1)Q \cap E_1$ and then let $w_Q \in E_1$ be adjacent to $z_Q$ so that $|z_Q - w_Q| = \Delta \approx \delta_Q$. Since $9\delta_Q \geq \Delta$, it follows that $w_Q \in 50 Q$, so $w_Q \in 50Q \cap E_1$, as desired.

If $Q \in \cz_{II}$ then we instead choose $z_Q \in E_1$ to be the point directly below $x_Q \in (1.1Q) \cap E_2$ so that $|z_Q - x_Q| = \Delta$. We then let $w_Q \in E_1 $ be a point adjacent to $z_Q$, so that $|w_Q - z_Q| = \Delta \approx \delta_Q $. As before, since $x_Q \in 1.1Q$ and $9\delta_Q \geq \Delta$, we see that $z_Q,w_Q \in 50Q$, as desired.

If $Q \in \partial \cz$ then we define $z_Q = z_0$ and $w_Q = w_0$, where $z_0 = (-1,0)$ and $w_0 = (1,0)$. Note that $|z_Q - w_Q| \approx 1 \approx \delta_Q$. Before we established that $z_0, w_0 \in 50 Q$, so, in particular, $z_Q,w_Q \in 50 Q \cap E_1$, as desired.

\end{proof}

\section{Constructing the interpolant}
Let $f: E \rightarrow \R$. In this section, we will construct a function $F: Q^0 \rightarrow \R$ satisfying $F|_E = f$.

Let $\{\theta_Q\}_{Q\in \cz}$ be a partition of unity subordinate to $\cz$ constructed so that the following properties hold. For any $Q \in \cz$, we have:
\begin{enumerate}
    \item[\textbf{(POU1)}] $\supp(\theta_Q) \subset 1.1Q$.
    \item[\textbf{(POU2)}] For any $|\alpha| \le 2$, $\|\partial^\alpha \theta_Q\|_{L^\infty}\lesssim \delta_Q^{-|\alpha|}.$
    \item[\textbf{(POU3)}]$ 0 \le \theta_Q \le 1$.
\end{enumerate}
For any $x \in Q^0$, we have
\begin{enumerate}
\item[\textbf{(POU4)}]$\sum_{Q \in \cz} \theta_Q(x) =1.$
\end{enumerate}

Our interpolant $F$ will then be of the form
\begin{equation}\label{eq: interp 1}
    F = \sum_{Q \in \cz} P_Q \theta_Q,
\end{equation}
where each $P_Q$ is an affine polynomial on $\R^2$ of the form
\[
P_Q(x) = L_Q(x) + \eta_Q\cdot x^{(2)}\;\text{for}\; x = (x^{(1)}, x^{(2)}) \in \R^2.
\]
Here, $L_Q$ is an affine polynomial satisfying $\partial_2 L_Q = 0$ and $\eta_Q$ is a real number.

In the previous section we have associated to each square $Q \in \cz$ the points $z_Q, w_Q \in E_1$. We now define $L_Q$ to be the unique affine polynomial satisfying:
\begin{equation}\label{LQ:defn}
    L_Q|_{\{z_Q, w_Q\}} = f|_{\{z_Q, w_Q\}} \mbox{  and  } \partial_2 L_Q = 0.
\end{equation}
Thanks to Property 4 of Lemma \ref{lem: bpoint} there exists an affine polynomial $L_0$ for which
\[
L_Q = L_0 \;\text{for all}\; Q \in \partial \cz.
\]
We will choose the $\eta_Q$ later in Sections \ref{sec: eta} and \ref{sec: eta tree}.

We now compute a simple upper bound for the $L^{2,p}$-seminorm of $F$.

\begin{lem}[The Patching Lemma]\label{lem: patching}
    The function $F$ satisfies
    \[
    \|F\|^p_{L^{2,p}(Q^0)} \lesssim \sum_{\substack{Q, Q' \in \cz:\\ Q\touch Q'}} \Big\{ \|L_Q - L_{Q'}\|_{L^\infty(Q)}^p \delta_Q^{2-2p} + |\eta_Q - \eta_{Q'}|^p \delta_Q^{2-p}\Big\}. 
    \]
\end{lem}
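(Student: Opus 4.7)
The strategy is to estimate $\int_{Q'} |\nabla^2 F|^p$ for each $Q' \in \cz$ separately and then sum over $Q' \in \cz$ (which partitions $Q^0$). Fix $Q' \in \cz$. By property (POU4), and because $P_{Q'}$ is affine, on $Q'$ we have
\[
\nabla^2 F = \sum_{Q \in \cz} \nabla^2\bigl[(P_Q - P_{Q'}) \theta_Q\bigr].
\]
Only squares $Q$ with $Q \touch Q'$ contribute (by (POU1)), and there are $O(1)$ such squares by Property 3 of Lemma \ref{lem: cz}. Since $P_Q - P_{Q'}$ is affine, its Hessian vanishes, and the Leibniz rule gives
\[
\bigl| \nabla^2\bigl[(P_Q - P_{Q'}) \theta_Q\bigr] \bigr| \lesssim |P_Q - P_{Q'}| \cdot |\nabla^2 \theta_Q| + |\nabla(P_Q - P_{Q'})| \cdot |\nabla \theta_Q|.
\]

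Applying (POU2), using $\delta_Q \approx \delta_{Q'}$ for $Q \touch Q'$ (Property 2 of Lemma \ref{lem: cz}), noting $|1.1Q \cap Q'| \lesssim \delta_Q^2$, and using that $\nabla(P_Q - P_{Q'})$ is a constant vector, I raise to the $p$-th power and integrate to get
\[
\int_{Q'} |\nabla^2 F|^p \lesssim \sum_{Q: Q \touch Q'} \Bigl( \delta_Q^{2-2p} \|P_Q - P_{Q'}\|_{L^\infty(1.1Q \cap Q')}^p + \delta_Q^{2-p} |\nabla(P_Q - P_{Q'})|^p \Bigr).
\]
Since $P_Q - P_{Q'}$ is affine and $1.1Q \cap Q'$ lies within distance $O(\delta_Q)$ of $Q$, one has $\|P_Q - P_{Q'}\|_{L^\infty(1.1Q \cap Q')} \lesssim \|P_Q - P_{Q'}\|_{L^\infty(Q)} + \delta_Q |\nabla(P_Q - P_{Q'})|$. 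So it suffices to bound $\|P_Q - P_{Q'}\|_{L^\infty(Q)}$ and $|\nabla(P_Q - P_{Q'})|$ separately in terms of $\|L_Q - L_{Q'}\|_{L^\infty(Q)}$ and $|\eta_Q - \eta_{Q'}|$.

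Write $P_Q - P_{Q'} = (L_Q - L_{Q'}) + (\eta_Q - \eta_{Q'}) x^{(2)}$, where $L_Q - L_{Q'}$ depends only on $x^{(1)}$ since $\partial_2 L_Q = \partial_2 L_{Q'} = 0$. The key geometric fact is $\max_{x \in Q} |x^{(2)}| \lesssim \delta_Q$ for every $Q \in \cz$, and I expect this to be the step that most requires care. Its short proof: because $E_1 \subset \R \times \{0\}$, we have $\dist(x, E_1) \ge |x^{(2)}|$ for any $x$, hence $\min_{x \in Q} |x^{(2)}| \le \dist(Q, E_1) \lesssim \delta_Q$ by \eqref{eq: cz 3}, and $\diam(Q) \lesssim \delta_Q$ completes the claim. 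Without this observation the $\eta_Q - \eta_{Q'}$ contribution could blow up for squares sitting at height $\gg \delta_Q$. Given the claim,
\[
\|P_Q - P_{Q'}\|_{L^\infty(Q)} \lesssim \|L_Q - L_{Q'}\|_{L^\infty(Q)} + \delta_Q |\eta_Q - \eta_{Q'}|,
\]
and since $L_Q - L_{Q'}$ is a linear polynomial in $x^{(1)}$ alone, $|\nabla(L_Q - L_{Q'})| \lesssim \delta_Q^{-1} \|L_Q - L_{Q'}\|_{L^\infty(Q)}$, so $|\nabla(P_Q - P_{Q'})| \lesssim \delta_Q^{-1}\|L_Q - L_{Q'}\|_{L^\infty(Q)} + |\eta_Q - \eta_{Q'}|$. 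Substituting these bounds into the previous display, summing over $Q' \in \cz$, and distributing each cross term across the two ordered pairs it produces yields the stated inequality.
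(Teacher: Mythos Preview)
Your proof is correct and follows essentially the same route as the paper's: subtract $P_{Q'}$ using (POU4), apply the Leibniz rule and (POU1)--(POU2), split $P_Q-P_{Q'}$ into the $L_Q-L_{Q'}$ and $(\eta_Q-\eta_{Q'})x^{(2)}$ pieces, and sum over $Q'\in\cz$. The main difference is that you explicitly justify the key geometric fact $\max_{x\in Q}|x^{(2)}|\lesssim\delta_Q$ via \eqref{eq: cz 3}, whereas the paper simply asserts it.
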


\begin{proof}
    Fix a square $Q' \in \cz$. Observe that 
    \[
    F(x) = \sum_{Q \in \cz} \theta_Q(x) [P_Q(x) - P_{Q'}(x)] + P_{Q'}(x).
    \]
    By Property 4 of Lemma \ref{lem: cz}, there are a bounded number of squares $Q \in \cz$ for which $x \in (1.1Q) \cap Q'$. Therefore, by \textbf{(POU1)}, there are a bounded number of $Q \in \cz$ with $\supp(\theta_Q) \cap Q' \ne \emptyset$. Taking $2^{\text{nd}}$ derivatives, using \textbf{(POU2)}, and integrating $p^{\text{th}}$ powers then gives
    \[
    \|F\|_{L^{2,p}(Q')}^p \lesssim \sum_{\substack{Q\in \cz:\\ Q \touch Q'} } \big\{\delta_Q^{2 - 2p} \|P_Q - P_{Q'}\|^p_{L^\infty(Q)} + \delta_Q^{2-p}| \nabla (P_Q - P_{Q'})|\big\}.
    \]
    For any affine polynomial $P$ we have $|\nabla P| \le \delta_Q^{-1} || P||_{L^\infty(Q)}$, and thus
    \[
    \|F\|_{L^{2,p}(Q')}^p \lesssim \sum_{\substack{Q\in \cz:\\ Q \touch Q'} } \delta_Q^{2 - 2p} \|P_Q - P_{Q'}\|^p_{L^\infty(Q)}.
    \]
    By the triangle inequality, and the fact that $|z^{(2)}| \leq C \delta_Q$ whenever $z \in 1.1Q$,
\begin{align*}
    \| P_Q - P_{Q'} \|_{L^\infty(Q)} &\leq \| L_Q - L_{Q'} \|_{L^\infty(Q)} + |\eta_Q - \eta_{Q'}| \| z^{(2)}\|_{L^\infty(Q)} \\
    &\lesssim \| L_Q - L_{Q'} \|_{L^\infty(Q)} + |\eta_Q - \eta_{Q'} | \delta_Q.
\end{align*}
    Therefore,
\[
    \|F\|_{L^{2,p}(Q')}^p \lesssim \sum_{\substack{Q\in \cz:\\ Q \touch Q'} } \left\{\delta_Q^{2 - 2p} \|L_Q - L_{Q'}\|^p_{L^\infty(Q)} + \delta_Q^{2-p} |\eta_Q - \eta_{Q'}|^p \right\}.
\]
    Since $\cz$ is partition of $Q^0$, summing over $Q' \in \cz$ proves the lemma.
\end{proof}

\section{Establishing compatibility of the $L_Q$}\label{sec: lq}

In this section we will prove that for any function $G \in L^{2,p}(\R^2)$ with $G|_E = f$ we have
\begin{equation}\label{eq: lq1}
\sum_{\substack{Q, Q' \in \cz:\\Q\touch Q'}} \|L_Q - L_{Q'}\|_{L^\infty(Q)}^p \delta_Q^{2-2p} \lesssim \|G\|_{L^{2,p}(\R^2)}^p.
\end{equation}

Fix some $r$ satisfying $1 < r < p$. We claim that for any $Q, Q' \in \cz$ with $Q\touch Q'$ we have
\begin{equation}\label{eq: lq1.1}
\| L_Q - L_{Q'}\|_{L^\infty(Q)}^r \lesssim \delta_Q^{2r-2} \|G\|_{L^{2,r}(250Q)}^r.
\end{equation}

Observe that for any $x \in Q$ we have
\begin{equation*}
L_Q(x) - L_{Q'}(x) = f(z_Q) + \partial_1 L_Q\cdot (x-z_Q) - f(z_{Q'}) - \partial_1 L_{Q'}\cdot (x - z_{Q'}).
\end{equation*}
Therefore, since $G(z_{Q'}) = f(z_{Q'})$ and $\partial_2 L_Q = \partial_2 L_{Q'}=0$, we have
\begin{align*}
    \|L_Q - L_{Q'}\|_{L^\infty(Q)} \le & | f(z_Q) - T_{z_{Q'}, 250Q}(G)(z_Q)| \\ &+ |(\partial_1 G)_{250Q} - \partial_1 L_Q|\cdot\delta_Q + |(\partial_1 G)_{250Q} - \partial_1 L_{Q'}|\cdot \delta_Q.
\end{align*}
Recall that for any $Q, Q' \in \cz$ with $Q \leftrightarrow Q'$ we have $\frac{1}{2}\delta_{Q'} \le \delta_Q \le 2 \delta_{Q'}$ (see Lemma \ref{lem: cz}), and therefore $50Q' \subset 250 Q$. In particular, we have 
\[
\{z_Q, w_Q, z_{Q'}, w_{Q'}\} \subset (250 Q)\cap E_1.
\]
Since $G|_E = f$, Lemma \ref{lem: poincare} then implies
\begin{align*}
    &|f(z_Q) - T_{z_{Q'}, 250 Q}(G)(z_Q)| \lesssim \delta_Q^{2-2/r}\|G\|_{L^{2,r}(250 Q)},\\
    &|f(z_Q) - f(w_Q) - (\partial_1 G)_{250 Q} \cdot(z_Q - w_Q)| \lesssim \delta_Q^{2-2/r}\|G\|_{L^{2,r}(250 Q)},\\
    & |f(z_{Q'}) - f(w_{Q'}) - (\partial_1 G)_{250 Q}\cdot(z_{Q'} - w_{Q'})| \lesssim \delta_Q^{2-2/r}\|G\|_{L^{2,r}(250 Q)};
\end{align*}
we deduce \eqref{eq: lq1.1}.

The inequality \eqref{eq: lq1.1} implies that
\[
\| L_Q - L_{Q'}\|_{L^\infty(Q)}^r \lesssim \delta_Q^{2r} \cM(|\nabla^2 G|^r)(z)\;\text{for any}\; z \in Q,
\]
where $\cM$ is the Hardy-Littlewood maximal operator (see Section \ref{sec: prelim}). Taking $(p/r)$-th powers and integrating gives
\[
\| L_Q - L_{Q'}\|_{L^\infty(Q)}^p \lesssim \delta_Q^{2p-2} \|\cM(|\nabla^2 G|^r)\|_{L^{p/r}(Q)}^{p/r}.
\]
By Properties 2 and 3 of Lemma \ref{lem: cz}, we deduce
\[
\sum_{\substack{Q, Q' \in \cz:\\Q \touch Q'}} \|L_Q - L_{Q'}\|_{L^\infty(Q)}^p \delta_Q^{2-2p} \lesssim \sum_{Q \in \cz} \|\cM(|\nabla^2 G|^r)\|_{L^{p/r}(Q)}^{p/r}.
\]
Since $\cz$ is a pairwise disjoint collection of squares, we have
\[
\sum_{Q \in \cz} \|\cM(|\nabla^2 G|^r)\|_{L^{p/r}(Q)}^{p/r} \le \| \cM(|\nabla^2 G|^r)\|_{L^{p/r}(\R^2)}^{p/r}.
\]
We use the boundedness of the Hardy-Littlewood maximal operator from $L^{q}$ to $L^q$ for $1 < q \le \infty$ to deduce \eqref{eq: lq1}.

\section{Step I of choosing the $\eta_Q$: Clustering}\label{sec: eta}

For each $0 \le \ell \le L$ and fixed $\vec{s}_{\leq \ell} = (s_1,\cdots,s_\ell) \in \{-1, +1\}^\ell$ we define a set $C(\vec{s}_{\leq \ell}) := \{ (z,\Delta) : z = \sum_{k=1}^L s_k \varepsilon^k, \; s_{\ell+1},\cdots, s_L \in \{-1, +1\}\} \subset E_2$. We refer to $C(\vec{s}_{\leq \ell})$ as a \emph{cluster} of $E_2$ at depth $\ell$. (By convention, in the edge case $\ell=0$, $\vec{s}_{\leq 0}$ is the empty list and $C(\vec{s}_{\leq 0}) = E_2$.) Note that the clusters of depth $L$ are singletons.

Accordingly, for fixed $\ell$ there are $2^\ell$ distinct clusters of $E_2$ of depth $\ell$. The convex hulls of distinct clusters are disjoint (assuming $\varepsilon \leq 1/2$), and thus, the clusters of depth $\ell$ inherit an order from the real number line. We shall order the clusters of $E_2$ of depth $\ell$ by $C_1^{\ell}$, $C_2^{\ell}$, \dots, $C_{2^\ell}^\ell$---the following properties are apparent:
\begin{itemize}
    \item $\#(C_i^{\ell}) = 2^{L-\ell}$,
    \item $\diam(C_i^{\ell}) \approx \varepsilon^{\ell+1}$ (for $\ell=0,\dots,L-1)$, and
    \item $\{C_i^\ell\}_{i=1}^{2^\ell}$ is a partition of $E_2$.
\end{itemize}
We write $\cC_\ell := \{ C_i^\ell\}_{i=1}^{2^{\ell}}$ to denote the set of all clusters of $E_2$ of depth $\ell$. We then define the set of all clusters
\[
\cC = \bigcup_{\ell=0}^{L} \cC_\ell.
\]

The set of all clusters forms a full, binary tree via the relation of set inclusion, with clusters of depth $\ell$ corresponding to vertices of depth $\ell$. Any $C \in \cC_\ell$ with $\ell \ge 1$ therefore has a unique \emph{parent cluster} $\pi(C)\in \cC_{\ell-1}$.

To every cluster $C \in \cC_\ell$ for $\ell=0,\dots, L-1$ we associate a ball $B_C$ and to every cluster $C \in \cC_\ell$ for $\ell=1,\dots,L-1$ we associate a ball $\hat{B}_C$. For convenience, we'll assume that $B_C$, $\hat{B}_C$ are centered at the same point and that $\text{radius}(\hat{B}_C) = 10^{M+1}\cdot \text{radius}(B_C)$ for a positive integer $M$ (independent of the cluster $C$). We require these balls to satisfy the following properties:
\begin{itemize}
\item For every $C \in \cC_\ell$ ($\ell=0,\cdots,L-1$), we have
\begin{itemize}
    \item[$\star$] $\diam(B_C) \approx \diam(C) \approx \varepsilon^{\ell+1}$, and
    \item[$\star$] $C \subset B_C$.
\end{itemize}
\item For every $C \in \cC_\ell$ ($\ell = 1, \dots, L-1)$, we have
\begin{itemize}
    \item[$\star$] $\diam(\hat{B}_C) \approx \diam(\pi(C)) \approx \varepsilon^\ell$, and
    \item[$\star$] $B_C \subset \hat{B}_C \subset B_{\pi(C)}$.
\end{itemize}
\item For distinct clusters $C, C' \in \cC_\ell$, we have
\begin{itemize}
    \item[$\star$] $\dist(B_C, B_{C'}) > c \varepsilon^{\ell}$ ($\ell=0,\dots,L-1$), and
    \item[$\star$] $\dist(\hat{B}_C, \hat{B}_{C'}) > c \varepsilon^\ell$ ($\ell=1, \dots, L-1)$.
\end{itemize}
\end{itemize}

We remark that (after taking $\varepsilon$ smaller than some absolute constant) the last bullet point implies that, for fixed $\ell$, each of the families $\{B_C\}_{C \in \cC_\ell}$ and $\{\hat{B}_C\}_{C \in \cC_\ell}$ is pairwise disjoint.

Given $f : E \rightarrow \R$, for each $x = (x^{(1)}, \Delta) \in E_2$ we define
\begin{equation}\label{eq: interp 2}
\eta_x = \frac{f(x) - f(x^{(1)},0)}{\Delta}.
\end{equation}
For each $Q \in \cz_{II}$ there is a single point in $(1.1Q \cap E_2)$, denoted by $x_Q$. We then define
\begin{equation}\label{etaQ:def1}
\eta_Q = \eta_{x_Q} \;\text{for every}\; Q \in \cz_{II}.
\end{equation}

To any square $Q \in \cz$ we associate a cluster $C_Q$ as follows. If $Q \in \cz\backslash \cz_{II}$, then we define $C_Q$ to be equal to the cluster $C \in \cC_\ell$ ($\ell=0,1,\cdots,L-1$) of smallest diameter for which $Q \subset B_{C}$ if such a cluster exists and equal to $E_2$ (recall that $E_2$ is the unique cluster of depth 0) if no such cluster exists. If $Q \in \cz_{II}$, then we set $C_Q = \{x_Q\}\in\cC_L$, where $x_Q$ is the unique point in $1.1Q \cap E_2$. Observe that for all $Q \in \partial \cz$ we have $C_Q = E_2$.

Observe that $C_Q \in \cC_L$ if and only if $1.1Q \cap E_2 \neq \emptyset$. In particular, if $1.1 Q \cap E_2 = \emptyset$ then $C_Q \in \cC_\ell$ ($\ell=0,\cdots,L-1$) has cardinality at least $2$.

For each cluster $C \in \cC$ we introduce a real number $\eta_C$.
Recall that every cluster $C \in\cC_L$ satisfies $\#(C) = 1$; we let $x_C$ denote the unique point in $C$. We then define
\begin{equation}\label{eq: a000}
\eta_C = \eta_{x_C} \;\text{for every}\; C \in \cC_L.
\end{equation}
(Recall that $\eta_{x}$ is defined by \eqref{eq: interp 2}.) We defer choosing $\{\eta_C\}_{C \in \cC_\ell}$ for $\ell=0,\dots, L-1$ until Section \ref{sec: eta tree}.

We determine the $\eta_Q$ in terms of the $\{\eta_C\}$ by specifying that
\begin{equation}\label{etaQ:def2}
\eta_Q = \eta_{C_Q}\;\text{for all} \; Q \in \cz.
\end{equation}
According to the above definitions, we have
\[
\eta_Q = \eta_{C_Q} = \eta_{x_{Q}} \mbox{ for all } Q \in \cz_{II},
\]
where $C_Q = \{x_Q\} = 1.1Q \cap E_2$. From this property, observe that, independently of how we choose the remaining $\eta_Q$, the function $F$ satisfies
\begin{equation*}
    F|_E = f,
\end{equation*}
as claimed.

We now work to bound the expression
\[
\sum_{\substack{Q,Q' \in \cz \\ Q \leftrightarrow Q'}} |\eta_Q - \eta_{Q'}|^p \delta_Q^{2-p}
\]
appearing on the right-hand side of the patching lemma (Lemma \ref{lem: patching}).

Note that any distinct $Q, Q' \in \cz$ with $Q \leftrightarrow Q'$ for which $\eta_Q \ne \eta_{Q'}$ necessarily satisfy either (1), $C_Q = \pi(C_{Q'})$, (2) $C_{Q'} = \pi(C_Q)$, or (3) $C_Q,C_{Q'} \in \cC_{L}$, $C_Q \neq C_{Q'}$ and $\pi(C_Q) = \pi(C_{Q'})$. Therefore,
\begin{equation}\label{eq: m0}
    \begin{split}
        \sum_{Q \leftrightarrow Q'} |\eta_Q - \eta_{Q'}|^p \delta_Q^{2-p} \lesssim &\sum_{ \ell=1}^L \sum_{C \in \cC_\ell} | \eta_{\pi(C)} - \eta_{C}|^p \sum_{(Q,Q') \in \cQ_C} \delta_Q^{2-p}\\
        & + \sum_{\substack{C, C' \in \cC_L:\\ \pi(C) = \pi(C')}} |\eta_{C} - \eta_{C'}|^p  \sum_{\substack{Q,Q' \in \cz \\ C_Q = C, C_{Q'} = C'}} \delta_Q^{2-p},
    \end{split}
\end{equation}
where $\cQ_C$ is defined for $C \in \cC_\ell$ ($\ell=1,\dots,L)$ by
\begin{equation*}
\cQ_C := \{(Q, Q') \in \cz \times \cz : Q \leftrightarrow Q', C_Q = C, C_{Q'} = \pi(C)\}.
\end{equation*}

We first bound the second sum on the right-hand side of \eqref{eq: m0}. Note that for $C \in \cC_L$ there are a bounded number of $Q \in \cz$ with $C_Q = C$ (any such $Q$ satisfies that $1.1Q \cap E_2 = \{x_Q\} = C$). For any such $Q$, it holds that $\delta_Q \approx \Delta = \epsilon^L$. Similarly, for $C' \in \cC_L$ there are a bounded number of $Q' \in \cz$ with $C_{Q'} = C'$. If $\pi(C) = \pi(C')$ then we can use the triangle inequality to estimate $|\eta_{C} - \eta_{C'}| \leq |\eta_C - \eta_{\pi(C)}| + |\eta_{C'} - \eta_{\pi(C')}|$. Thus,
\begin{equation}\label{term2:eqn}
\sum_{\substack{C, C' \in \cC_L:\\ \pi(C) = \pi(C')}} |\eta_{C} - \eta_{C'}|^p  \sum_{\substack{Q,Q' \in \cz \\ C_Q = C, C_{Q'} = C'}} \delta_Q^{2-p} \lesssim \epsilon^{L(2-p)} \sum_{C \in \cC_L}  | \eta_C - \eta_{\pi(C)}|^p .
\end{equation}

Note that for any cluster $C \in \cC_\ell$, $\ell = 1, \dots, L-1$, we have
\begin{equation}\label{deltaQ:prop}
\Delta/9 \leq \delta_Q \leq c \varepsilon^{\ell+1}\;\text{for any}\; Q\in \cz \;\text{for which}\; C_Q = C.
\end{equation}
Indeed, see \eqref{eq: cz 1}, and note that if $Q$ and $C$ are as above then $Q \subset B_C$ and so $\delta_Q \leq \diam(B_C) \approx \epsilon^{\ell+1}$.

Observe that $\#\cQ_C \lesssim 1$ for any $C \in \cC_L$, simply because there are at most a bounded number of squares $Q$ with $C_Q = C$ (any such square satisfies $1.1Q \cap E = C_Q$), and each of these squares $Q$ touches a bounded number of squares $Q'$. Since any $Q$ with $C_Q = C $ for $C \in \cC_L$ satisfies $\delta_Q \approx \Delta = \varepsilon^L$, we get
\begin{equation}\label{eq: a00}
    \sum_{(Q,Q') \in \cQ_C}  \delta_Q^{2-p} \lesssim  \varepsilon^{L(2-p)}\;\text{for any}\; C \in \cC_L.
\end{equation}

Now fix $C \in \cC_\ell$ for $\ell=1,\dots,L-1$. We claim that
\begin{equation}\label{eq: a0}
\#\{(Q,Q')\in\cQ_C: \delta_Q = \delta\} \lesssim 1 \;\text{for every}\; \delta > 0.
\end{equation}
 Suppose $(Q,Q') \in \cQ_C$ with $ \delta_{Q}= \delta$. Then, since $Q \subset B_C$, $Q' \not\subset B_C$, and $Q\leftrightarrow Q'$, it must be the case that $Q, Q'$ are contained in a $c \delta_Q$-neighborhood of the boundary of $B_C$ for some absolute constant $c$. By \eqref{eq: cz 3}, it is also the case that $Q, Q'$ are at most distance $c' \delta_Q$ from the $x^{(1)}$-axis for another absolute constant $c'$. Here, $\delta_Q = \delta$ is fixed. A simple packing argument then yields \eqref{eq: a0}. Combining \eqref{deltaQ:prop} and \eqref{eq: a0}, we show that
\begin{equation}\label{eq: a1}
\begin{aligned}
        & \sum_{(Q,Q')\in \cQ_C} \delta_Q^{2-p} \leq \sum_{k\leq \log_2(c \epsilon^{\ell+1})} \sum_{(Q,Q')\in \cQ_C, \delta_Q = 2^k} (2^k)^{2-p} \\
        \quad &\lesssim \sum_{k\leq \log_2(c \epsilon^{\ell+1})} 2^{k(2-p)}
         \lesssim  \varepsilon^{(\ell+1)(2-p)} \quad (C \in \cC_\ell, \ell=1,\cdots,L-1).
\end{aligned}
\end{equation}
(Note that we've used the assumption $(2-p)>0$.)

Combining \eqref{term2:eqn} with \eqref{eq: a00}, \eqref{eq: a1}, allows us to continue the bound in \eqref{eq: m0},
\begin{equation}\label{eq: eo10}
\begin{split}
    \sum_{Q\touch Q'} |\eta_Q - \eta_{Q'}|^p \delta_Q^{2-p} \lesssim &\sum_{\ell=1}^{L-1} \varepsilon^{(\ell+1)(2-p)}\sum_{C \in \cC_\ell} | \eta_{\pi(C)} - \eta_C|^p\\
    & +  \varepsilon^{L(2-p)} \sum_{C \in \cC_L} | \eta_{\pi(C)} - \eta_{C}|^p.
\end{split}
\end{equation}
For $\ell=1,\dots,L$ we define
\begin{equation} \label{nu ell: defn}
\nu_\ell = \begin{cases}
    \varepsilon^{(\ell+1)(2-p)} &\text{for}\; \ell=1,\dots, L-1,\\
    \varepsilon^{L(2-p)} &\text{for}\;  \ell=L.
\end{cases}
\end{equation}
We then rewrite inequality \eqref{eq: eo10} as
\begin{equation}\label{eq: eo1}
    \sum_{Q\touch Q'} |\eta_Q - \eta_{Q'}|^p \delta_Q^{2-p} \lesssim \sum_{\ell=1}^{L} \nu_\ell \sum_{C \in \cC_\ell} | \eta_{\pi(C)} - \eta_C|^p.
\end{equation}

In the next section we will choose the remaining $\eta_C$ (for $C \in \cC_\ell$ with $\ell=0,1,\dots,L-1$) and prove that our choices satisfy suitable estimates. Finally, in the last section we will explain how to use those estimates to complete the proof of Theorem \ref{thm: main}.

\section{Step II of choosing the $\eta_Q$: Minimizing $\|F\|_{L^{2,p}}$}\label{sec: eta tree}

Recall that we have already defined $\eta_C$ for $C \in \cC_{L}$ in \eqref{eq: a000}.

The goal of this section is to choose $\eta_C$ for each $C \in \cC_{\ell}$ ($\ell=0,1,\dots,L-1)$ so that for any $G \in L^{2,p}(\R^2)$ with $G|_E =f$ we have
\begin{equation}\label{eq: et1}
    \sum_{\ell=1}^{L} \nu_\ell \sum_{C \in \cC_\ell} | \eta_{\pi(C)} - \eta_C|^p \lesssim \|G\|_{L^{2,p}(\R^2)}^p.
\end{equation}
This is easy to do, thanks to Theorem \ref{thm: fk}. Recall $\nu_\ell$ are defined in \eqref{nu ell: defn}.

The following is an immediate corollary of Theorem \ref{thm: fk}. 

\begin{cor}\label{cor: fk}
    There exist $\eta_C$ for each $C \in \cC_\ell$ ($\ell = 1, \dots, L-1)$, depending linearly on $\{\eta_C\}_{C \in \cC_{L}},$ so that the following holds. Let $\{\gamma_C\}_{C\in \cC}$ be a collection of real numbers with $\gamma_C = \eta_C$ for each $C \in \cC_{L}$. Then
    \[
    \sum_{\ell=1}^{L} \nu_\ell \sum_{C \in \cC_\ell} | \eta_{\pi(C)} - \eta_C|^p \lesssim \sum_{\ell=1}^{L} \nu_\ell \sum_{C \in \cC_\ell} | \gamma_{\pi(C)} - \gamma_C|^p.
    \]
\end{cor}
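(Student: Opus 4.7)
The plan is to apply Theorem \ref{thm: fk} directly, after identifying the cluster tree $\cC$ with the abstract binary tree in that theorem and matching up the weights. Since $\cC$ is a full binary tree of depth $L$ under the parent relation $\pi$, with the clusters of depth $\ell$ playing the role of $V_\ell$ and $\cC_L$ playing the role of $\partial V$, I take $N=L$, $V = \cC$, and choose the edge weights $w_k = \nu_k$ for $k=1,\dots,L$, where $\nu_k$ is as in \eqref{nu ell: defn}. With this identification the $L^{1,p}(V)$-seminorm from \eqref{eq: smnrm} becomes exactly
\[
\|\Phi\|_{L^{1,p}(\cC)}^p = \sum_{\ell=1}^{L} \nu_\ell \sum_{C \in \cC_\ell} |\Phi(\pi(C)) - \Phi(C)|^p,
\]
which is the expression appearing on both sides of the desired inequality.

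Next, I would set $\phi := \{\eta_C\}_{C \in \cC_L}$, the boundary data already specified in \eqref{eq: a000}, and apply the linear extension operator $H: \R^{\partial V} \to \R^V$ from Theorem \ref{thm: fk}. Define
\[
\eta_C := (H\phi)(C) \quad \text{for } C \in \cC_\ell, \; \ell = 0,1,\dots,L-1.
\]
By the extension property (part 1 of Theorem \ref{thm: fk}), $(H\phi)(C) = \phi(C) = \eta_C$ for $C \in \cC_L$, so this definition is consistent with \eqref{eq: a000}. Since $H$ is linear and $\phi$ depends linearly on $\{\eta_C\}_{C \in \cC_L}$ (trivially), the new values $\{\eta_C\}_{C \in \cC \setminus \cC_L}$ also depend linearly on $\{\eta_C\}_{C \in \cC_L}$, as required.

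Finally, any collection $\{\gamma_C\}_{C \in \cC}$ that agrees with $\phi$ on $\cC_L$ is by definition an admissible extension of $\phi$, so $\|\phi\|_{L^{1,p}(\partial V)}^p \le \|\gamma\|_{L^{1,p}(\cC)}^p$. Combining this with the norm bound of part 2 of Theorem \ref{thm: fk} applied to $H\phi$ gives
\[
\sum_{\ell=1}^{L} \nu_\ell \sum_{C \in \cC_\ell} |\eta_{\pi(C)} - \eta_C|^p = \|H\phi\|_{L^{1,p}(\cC)}^p \lesssim \|\phi\|_{L^{1,p}(\partial V)}^p \le \sum_{\ell=1}^{L} \nu_\ell \sum_{C \in \cC_\ell} |\gamma_{\pi(C)} - \gamma_C|^p,
\]
which is the claimed inequality. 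There is no real obstacle: the only points to verify carefully are that the abstract tree in Theorem \ref{thm: fk} (with depth indexing, leaves, and weighted edge sums) is notationally consistent with the cluster tree $\cC$ and the weights $\nu_\ell$, and that the boundary data to which $H$ is applied indeed coincides with the previously fixed leaf values \eqref{eq: a000}.
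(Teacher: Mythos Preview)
Your proposal is correct and is exactly the argument the paper has in mind: the paper simply declares the corollary to be ``an immediate corollary of Theorem \ref{thm: fk},'' and your write-up spells out that immediate deduction (identify $\cC$ with the tree $V$, set $w_\ell=\nu_\ell$, apply $H$ to the leaf data \eqref{eq: a000}, and use the trace-seminorm definition to compare against any competitor $\gamma$). The only minor remark is that the corollary as stated omits $\ell=0$, but your definition $\eta_C:=(H\phi)(C)$ for all $C\in\cC$, including the root, is what is actually needed (and what the paper uses immediately afterward).
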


We fix the definition of $\{\eta_C\}_{C \in \cC}$ as in Corollary \ref{cor: fk}. Recall that $\{\eta_C\}_{C \in \cC_{L}}$ depend linearly on the data $f$ -- see \eqref{eq: a000} and \eqref{eq: interp 2}. Thus, $\eta_C$ depends linearly on $f$ for each $C \in \cC$. 

We  claim that for any $G \in L^{2,p}(\R^2)$ with $G|_E = f$ we have
\begin{equation}\label{eq: et2}
    \sum_{\ell=1}^{L-1} \varepsilon^{(\ell+1)(2-p)}\sum_{C \in \cC_\ell} | (\partial_2 G)_{B_{\pi(C)}} - (\partial_2 G)_{B_C}|^p\lesssim \|G\|_{L^{2,p}(\R^2)}^p.
\end{equation}
We will later show that \eqref{eq: et2} and Corollary \ref{cor: fk} allow us to deduce estimate \eqref{eq: et1} for the $\{\eta_C\}_{C \in \cC}$, completing the task of this section.

For every $C \in \cC_\ell$, $\ell=1,\dots, L-1$, let $r_C, x_C$ denote the radius and center, respectively, of the ball $B_C$. For $k \ge 0$ define the annulus
\[
A_{C,k} = \{ x \in \R^2 : 10^k r_C \le |x - x_C| < 10^{k+1} r_C\}.
\]
Recall (see Section \ref{sec: eta}) that $\hat{B}_C$ is also centered at $x_C$ and that
\[
\text{radius}(\hat{B}_C) = 10^{M+1} r_C.
\]
for some positive integer $M$. Define another annulus
\[
A_C = \bigcup_{k = 0 }^{M} A_{C,k}.
\]
Note that the collection of annuli $\{A_C\}_{C \in \cC_\ell, \ell=1,\dots L-1}$ is pairwise disjoint.

We claim that for any $1<r<p$ and for every $C\in \cC_\ell$ ($\ell = 1,\dots, L-1$) we have
\begin{equation}\label{eq: et3}
    |(\partial_2 G)_{B_{\pi(C)}} - (\partial_2 G)_{B_C}|^p \varepsilon^{(\ell+1)(2-p)} \lesssim \| \cM( | \nabla^2 G|^r) \|_{L^{p/r}(A_C)}^{p/r}.
\end{equation}
Since the $A_C$ are pairwise disjoint, \eqref{eq: et3} implies
\[
\sum_{\ell=1}^{L-1} \varepsilon^{(\ell+1)(2-p)}\sum_{C \in \cC_\ell} | (\partial_2 G)_{B_{\pi(C)}} - (\partial_2 G)_{B_C}|^p\lesssim \| \cM(|\nabla^2 G|^r) \|_{L^{p/r}(\R^2)}^{p/r};
\]
we use the boundedness of the Hardy-Littlewood maximal operator from $L^{p/r}$ to $L^{p/r}$ to deduce \eqref{eq: et2}.

To prove \eqref{eq: et3}, use Lemma \ref{lem: poincare} to get
\begin{equation*}
    \begin{split}
        |(\partial_2 G)_{B_C} - (\partial_2G)_{A_{C,0}}| &\lesssim \varepsilon^{(\ell+1)(1-2/r)}\|G\|_{L^{2,r}(B_C \cup A_{C,0})}\\
        &\lesssim \varepsilon^{\ell+1}(\cM(|\nabla^2 G|^r(z))^{1/r}
    \end{split}
\end{equation*}
for any $z \in A_{C,0}$. (Recall that $r_C \approx \varepsilon^{\ell+1}$.) Taking $p$-th powers and integrating over $A_{C,0}$, we get
\[
|(\partial_2 G)_{B_C} - (\partial_2G)_{A_{C,0}}|^p \varepsilon^{(\ell+1)(2-p)}\lesssim \| \cM(|\nabla^2 G|^r)\|_{L^{p/r}(A_{C,0})}^{p/r}.
\]
Similarly, we can show
\begin{align*}
    & |(\partial_2 G)_{A_{C,k}} - (\partial_2 G)_{A_{C,k+1}} |^p \varepsilon^{(\ell+1)(2-p)} \lesssim 10^{k(p-2)} \| \cM( |\nabla^2 G|^r) \|_{L^{p/r}(A_{C,k})}^{p/r},\\
    & |(\partial_2 G)_{B_{\pi(C)}} - (\partial_2 G)_{A_{C,M}} |^p \varepsilon^{(\ell+1)(2-p)} \lesssim 10^{M(p-2)} \|\cM(|\nabla^2 G|^r) \|_{L^{p/r}(A_{C,M})}^{p/r}.
\end{align*}
We apply the triangle inequality and use that $p<2$ to get
\begin{equation*}
    \begin{split}
     |(\partial_2 G)_{B_C}-(\partial_2 G)_{B_{\pi(C)}}| \varepsilon^{(\ell+1)(2/p-1)} &\lesssim \|\cM(|\nabla^2 G|^r)\|^{1/r}_{L^{p/r}(A_C)} \sum_{k=0}^M 10^{k(1-2/p)}\\
     & \lesssim \|\cM(|\nabla^2 G|^r)\|^{1/r}_{L^{p/r}(A_C)},
    \end{split}
\end{equation*}
proving \eqref{eq: et3}, and thus, proving \eqref{eq: et2}.

We now explain how to prove \eqref{eq: et1}. We fix $G \in L^{2,p}(\R^2)$ with $G|_E = f$ and define $(\gamma_C)_{C \in \cC}$ in terms of $G$ and $(\eta_C)_{C \in \cC}$. We take $\gamma_C = (\partial_2 G)_{B_C}$ if $C \in \cC_\ell$ for $\ell = 0,\dots,L-1$ and $\gamma_C = \eta_C$ if $C \in \cC_{L}$. Now, we apply Corollary 1 to deduce that
\begin{equation}\label{eq: b0}
\begin{split}
\sum_{\ell=1}^{L} \nu_\ell \sum_{C \in \cC_{\ell}} | \eta_{\pi(C)} - \eta_C|^p & \lesssim \sum_{\ell=1}^{L-1} \varepsilon^{(\ell+1)(2-p)} \sum_{C \in \cC_{\ell}} | (\partial_2 G)_{B_{\pi(C)}} - (\partial_2 G)_{B_C}|^p \\
& \qquad + \varepsilon^{L(2-p)} \sum_{C \in \cC_{L}} | (\partial_2 G)_{B_{\pi(C)}} - \eta_C|^p.
\end{split}
\end{equation}

Let $C \in \cC_L$. Observe that $\eta_C = \eta_{x_C} = \frac{f(x_C) - f(x_C^{(1)},0)}{\Delta} = \frac{G(x_C) - G(x_C^{(1)},0)}{\Delta}$. By Lemma \ref{lem: poincare}, we easily get that
\[
|(\partial_2 G)_{B_{\pi(C)}} - \eta_C|^p \varepsilon^{L(2-p)} \lesssim ||G||_{L^{2,p}(B_{\pi(C)})}^p\;\text{for any}\; C \in \cC_L.
\]
The collection of balls $\{B_{\pi(C)}\}_{C \in \cC_L}$ has bounded overlap, so
\[
\varepsilon^{L(2-p)} \sum_{C \in \cC_{L}} | (\partial_2 G)_{B_{\pi(C)}} - \eta_C|^p \lesssim ||G||_{L^{2,p}(\R^2)}^p.
\]
Combining this estimate with \eqref{eq: b0} and \eqref{eq: et2}, we complete the proof of \eqref{eq: et1}.

\section{Proof of Main Theorem}

Combining Lemma \ref{lem: patching} and the inequalities \eqref{eq: lq1}, \eqref{eq: eo1}, and \eqref{eq: et1}  proves the following lemma.

\begin{lem}\label{lem: F0}
    Let $\{\eta_C\}_{C \in \cC}$ be defined via \eqref{eq: a000} and Corollary \ref{cor: fk}, and let $\{\eta_Q\}_{Q \in \cz}$ be defined in terms of $\{\eta_C\}_{C \in \cC}$ in \eqref{etaQ:def2}. Then the function $F\in L^{2,p}(Q^0)$ defined by \eqref{eq: interp 1} satisfies:
    \begin{itemize}
        \item $F |_E = f$
        \item For any $G \in L^{2,p}(\R^2)$ with $G|_E = f$, we have
        \[
        \|F\|_{L^{2,p}(Q^0)} \lesssim \|G\|_{L^{2,p}(\R^2)}.
        \]
    \end{itemize}
\end{lem}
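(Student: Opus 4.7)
The proof is essentially a bookkeeping exercise assembling four ingredients already established: the construction of $F$ (with its defining polynomials $P_Q$), the Patching Lemma, and the two compatibility estimates \eqref{eq: lq1} and \eqref{eq: et1} (together with the cluster reduction \eqref{eq: eo1}).

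First, I would verify $F|_E = f$. By \textbf{(POU4)} and the support condition \textbf{(POU1)}, for $x \in E$ we have $F(x) = \sum_{Q: x \in 1.1Q} \theta_Q(x) P_Q(x)$, so it suffices to check that $P_Q(x) = f(x)$ whenever $x \in 1.1Q \cap E$. Fix such a pair $(Q,x)$; by Property 1 of Lemma \ref{lem: cz}, $x$ is the unique point of $1.1Q \cap E$, so $Q \in \cz_I \cup \cz_{II}$. If $Q \in \cz_I$, then by Property 2 of Lemma \ref{lem: bpoint}, $x = z_Q$ and $x^{(2)} = 0$, so $P_Q(x) = L_Q(z_Q) = f(z_Q) = f(x)$ by \eqref{LQ:defn}. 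If $Q \in \cz_{II}$, then $x = x_Q = (x_Q^{(1)}, \Delta)$ and by Property 3 of Lemma \ref{lem: bpoint}, $z_Q = (x_Q^{(1)}, 0)$; using $\partial_2 L_Q = 0$ we get $L_Q(x_Q) = L_Q(z_Q) = f(z_Q) = f(x_Q^{(1)},0)$, and by \eqref{etaQ:def1} and \eqref{eq: interp 2}, $\eta_Q \cdot \Delta = f(x_Q) - f(x_Q^{(1)},0)$. Summing, $P_Q(x_Q) = f(x_Q)$.

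Second, I would apply the Patching Lemma (Lemma \ref{lem: patching}) to obtain the bound
\[
\|F\|_{L^{2,p}(Q^0)}^p \lesssim \sum_{Q \touch Q'} \|L_Q - L_{Q'}\|_{L^\infty(Q)}^p \delta_Q^{2-2p} + \sum_{Q \touch Q'} |\eta_Q - \eta_{Q'}|^p \delta_Q^{2-p}.
\]
The first sum is handled immediately by \eqref{eq: lq1}, which gives an upper bound of $C \|G\|_{L^{2,p}(\R^2)}^p$ for any $G \in L^{2,p}(\R^2)$ with $G|_E = f$. For the second sum, I would apply \eqref{eq: eo1} to reduce it to $\sum_{\ell=1}^L \nu_\ell \sum_{C \in \cC_\ell} |\eta_{\pi(C)} - \eta_C|^p$, and then apply \eqref{eq: et1} (which relies on the choice of $\{\eta_C\}$ from Corollary \ref{cor: fk}) to bound this cluster sum by $C \|G\|_{L^{2,p}(\R^2)}^p$ as well. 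Adding the two bounds and taking $p$-th roots yields the claimed norm estimate.

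No genuine obstacle arises here, since every analytic step has been carried out in the preceding sections; the only care required is to confirm that the choice of $\{\eta_C\}$ furnished by Corollary \ref{cor: fk} is indeed admissible in \eqref{eq: et1} (it is, because Corollary \ref{cor: fk} minimizes the left-hand side up to a constant over all extensions of the leaf data $\{\eta_C\}_{C \in \cC_L}$, while the proof of \eqref{eq: et1} exhibits a specific competitor $\gamma_C$ whose right-hand side is controlled by $\|G\|_{L^{2,p}(\R^2)}^p$), and to note that the identity $F|_E = f$ does not depend on the choice of $\{\eta_C\}_{C \in \cC_\ell}$ for $\ell < L$, as observed in Section \ref{sec: eta}.
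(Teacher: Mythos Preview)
Your proposal is correct and follows exactly the paper's approach: the paper's proof is the single sentence ``Combining Lemma \ref{lem: patching} and the inequalities \eqref{eq: lq1}, \eqref{eq: eo1}, and \eqref{eq: et1} proves the following lemma,'' and your argument unpacks precisely this combination, together with the verification of $F|_E = f$ already noted in Section \ref{sec: eta}.
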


It is easy to deduce Theorem \ref{thm: main} from Lemma \ref{lem: F0}. Recall that there is some affine polynomial $L_0$ such that
\[
L_Q = L_0\;\text{for every}\; Q \in \partial \cz.
\]
Similarly, recall that for every $Q\in \partial \cz$ we have $C_Q = E_2$. Writing $\eta_0 = \eta_{E_2}$, we thus have 
\[
\eta_Q = \eta_{0}\;\text{for every}\;Q \in \partial \cz.
\]

We define a function $\tilde{F}: \R^2 \rightarrow \R$ by setting
\[
\tilde{F}(x) = \begin{cases}
    F(x) &\text{if } x \in Q^0,\\
    L_{0}(x) + \eta_{0}\cdot x^{(2)} & \text{if } x \notin Q^0.
\end{cases}
\]
Clearly, $\tilde{F}|_E = f$ and $\|\tilde{F}\|_{L^{2,p}(\R^2)} = \| F\|_{L^{2,p}(Q^0)}$. This proves Theorem \ref{thm: main}.

\bibliographystyle{plain}
\bibliography{ref}

\end{document}